\documentclass[12pt]{article}
\usepackage[left=3cm, right=3cm, top=2cm]{geometry}
\usepackage{amsmath, amsthm, amssymb, mathtools}
\allowdisplaybreaks
\usepackage{amscd,amssymb}
\usepackage[all]{xy}
\usepackage{color}
\usepackage{verbatim}
\usepackage{mathrsfs}
\usepackage{graphicx,epsfig}
\usepackage[T1]{fontenc}
\usepackage[ansinew]{inputenc}
\usepackage{float}
\usepackage{setspace}
\usepackage{xpatch}
\usepackage{amsfonts}
\usepackage[colorlinks]{hyperref}
\usepackage{subcaption}
\usepackage{pst-node}
\usepackage{tikz-cd}
\usetikzlibrary{positioning}
\usetikzlibrary{shapes,arrows}
\usepackage{tikz}
\usepackage{graphicx,epsfig}
\usepackage{mathrsfs}
\usepackage{verbatim}

\newtheorem{thm}{Theorem}
\newtheorem{cor}[thm]{Corollary}
\newtheorem{lem}[thm]{Lemma}
\numberwithin{thm}{section}
\newtheorem{claim}{Claim}
\usepackage{tikz-cd}
 \newtheorem{prop}[thm]{Proposition}
\theoremstyle{definition}
 \newtheorem{defn}[thm]{Definition}
 \newtheorem{rmk}{Remark}
 \newtheorem{conj}{Conjecture}


%

\newcommand{\bS}{\mathbb{S}}
\newcommand{\A}{\mathcal{A}}

\newcommand{\M}{\mathcal{M}}

\newcommand{\ind}{{\rm{Ind}}}

\newcommand{\s}{\mathfrak{S}}

 \hbadness=99999
 \vbadness=99999

\title{Homotopy Type of Independence Complexes of Certain Families of Graphs}

\author{Shuchita Goyal\footnote{Indian Institute of Technology Bombay, Mumbai, India. Email: shuchitagoyal23@gmail.com}, Samir Shukla\footnote{Indian Institute of Technology Bombay, Mumbai, India. Email: samirshukla43@gmail.com}, Anurag Singh\footnote{Chennai Mathematical Institute, Chennai, India. Email: asinghiitg@gmail.com}}
\begin{document}

\date{ }

\maketitle
\begin{abstract}
We show that the independence complexes of generalised Mycielskian of complete graphs are homotopy equivalent to a wedge sum of spheres, and determine the  number of copies and the dimensions of these spheres. We also prove that  the independence complexes of categorical  product of complete graphs are wedge sum of circles, upto homotopy. Further, we show that if we perturb a  graph $G$ in a certain way, then the independence complex of this new graph is homotopy equivalent to the suspension of the independence complex of $G$.
\end{abstract}

\noindent {\bf Keywords} : Independence complexes, generalised Mycielskian, discrete Morse theory

\noindent 2010 {\it Mathematics Subject Classification}: primary 05C69, secondary 55P15

\vspace{.1in}

\hrule

\section{Introduction}

 A subset $I$ of vertex set of a graph $G$ is called {\it independent}, if the  induced subgraph of $G$ on $I$ is a collection of isolated vertices. The {\it independence complex}, $\ind(G)$, of a simple graph $G$ is the simplicial complex whose simplices are the independent sets of $G$. In last few years a lot of attention has been drawn towards the study of independence complexes of graphs.  

In \cite{BK1},  Babson and Kozlov used the topology of independence complexes of cycles to prove a conjecture by Lov\'{a}sz. Meshulam, in \cite{RM}, gave a connection  between the domination number of a graph $G$ and certain homological properties of $\ind(G)$; and their application to Hall-type theorems for coloured  independent sets.  Properties of independence complexes have also been used to study the Tverberg graphs \cite{ea} and 
the independent  system of representatives \cite{abz}.
For more on these complexes, interested reader  is referred to
\cite{Bar13, BLN, BB, csorba, eh, ea1,ea2, kk}. 

 There are only a few classes of graphs for which a closed form formula for the homotopy type of independence complexes is known. For instance, see \cite{BB} for stabe Kneser graphs, \cite{kk1} for forests, \cite{koz} for cycle graphs, and \cite{NS} for a family of regular bipartite graphs.  In this article, we compute the homotopy type of independence complex of a few families of graphs, and give the exact formula for the same. We also give results about the homotopy type of such complexes when a graph has a certain local structure. 
 


The layout of this article is as follows: In Section \ref{sec:productofcomplete}, we analyze the independence complexes  of product of complete graphs, and show that it is homotopy equivalent to a wedge of circles  (cf. Proposition \ref{thm:indkmkn}). Section \ref{sec:mycielskian} is devoted towards computation of independence complexes of generalised Mycielskian (see Definition \ref{defn:mycielskian}) of complete graphs (cf. Theorem \ref{thm:mycielskian}) which turns out to be a wedge sum of spheres. 
In Section \ref{sec:subdivision},  we show that if we perturb a  graph $G$ (locally; by removing some edges, and adding new edges and vertices) to obtain a new graph $H$, in a certain manner (refer to  Figure \ref{fig:subdivision}), then Ind$(H)$, is homotopy equivalent to the suspension of Ind$(G)$ (cf. Theorem \ref{thm:subdivision}). As an application of Theorem \ref{thm:subdivision}, we determine the homotopy type of independence complexes of cycles with certain type of subdivisions.

\section{Preliminaries}

A {\it graph} is an ordered pair $G=(V,E)$ where $V$ is called the set of vertices and $E \subseteq V \times V$, the set of unordered edges of $G$. The vertices $v_1, v_2 \in V$ are said to be adjacent, if $(v_1,v_2)\in E$. This is also denoted by $v_1 \sim v_2$, and if $v_1 = v_2$, then $v_1$ is said to be a \textit{looped} vertex. If $v$ is a vertex of $G$, then the set of its {\it neighbours} in $G$ is $\{x \in V(G): x \sim v\}$, and is denoted by $N(v)$. A graph $H$ with $V(H) \subseteq V(G)$ and $E(H) \subseteq E(G)$ is called a {\it subgraph} of the graph $G$. 
For a nonempty subset $U$ of $V(G)$, the induced subgraph $G[U]$, is the subgraph of $G$ with vertices $V(G[U]) = U$ and $E(G[U]) = \{(a, b) \in E(G) \ | \ a, b \in U\}$. In this article, $G[V(G)\setminus A]$ will be denoted by $G-A$ for $A\subsetneq V(G)$.

	The {\it complete graph} on $n$ vertices is a graph where any two distinct vertices are adjacent, and it is denoted by $K_n$. For $n \geq 3$, the {\it cycle graph $C_n$} is the graph with $V(C_n) = \{1, \ldots, n\}$ and $E(C_n) = \{(i, i+1) : 1 \leq i \leq n-1\} \cup \{(1, n)\}$.

 \begin{defn} \label{def:product}
	The {\it categorical product} of two graphs $G$ and $H$, denoted by $G\times H$ is the graph where $V(G\times H)=V(G)\times V(H)$ and  $(g,h) \sim (g',h')$ in $G\times H$, if and only if $g \sim g'$ in $G$ and $h \sim h'$ in $H$.
\end{defn}

For $r \geq 1$, let $L_r$ denote the path graph of length $r$ with loop at one end, i.e., it is a graph with  vertex set $V(L_r) = \{0, \ldots, r\}$ and  edge set $E(L_r) = \{(i,i+1) \ |  \ 0 \leq i \leq r-1\} \cup \{(0,0)\}$. 

\begin{defn} \label{defn:mycielskian}Let $G$ be  a graph and $r \geq 1$. The {\it $r$-th generalised Mycielskian}, $M_r(G)$, of $G$ is the graph $(G \times L_r) / \sim_{r}$,  where $\sim_r$ is the equivalence which identifies all those vertices whose second coordinate is $r$.  The graph $M_2(G)$ is called the {\it Mycielskian}  of $G$. 
\end{defn} 

An {\it (abstract) simplicial complex} $K$ is a collection of finite sets such that if $\tau \in K$ and $\sigma \subset \tau$, then $\sigma \in K$. The elements  of $K$ are called the  {\it simplices} of $K$.  If $\sigma \in K$ and $|\sigma |=k+1$, then $\sigma$ is said to be {\it $k$-dimensional}.  The set of $0$-dimensional simplices of $K$ is denoted by $V(K)$, and its elements are called {\it vertices} of $K$. A {\it subcomplex} of a simplicial complex $K$ is a simplicial complex whose simplices are contained in $K$. In this article, we always assume empty set as a simplex of any simplicial complex. 

The link of a vertex $v \in V(K)$ is the subcomplex of $K$ defined as 
$$
lk_K(v) : = \{\sigma \in K \ | \ v  \notin \sigma \ \text{and} \ \sigma \cup \{v\} \in K\}.
$$
The {\it star} of a simplex $\sigma \in K$ is the subcomplex of $K$ defined as
$$
st_K(\sigma):= \{\tau \in K \ |  \ \sigma \cup \tau \in K\}.
$$

	In this article, we consider any simplicial complex as a topological space, namely its  geometric realization. For the definition of geometric realization, we refer to  book \cite{dk} by  Kozlov.

  	\begin{defn}[\cite{Bar13}]
Let $K$ be a simplicial complex and $\sigma \in K$. The {\it star cluster} of $\sigma$ in $K$ is a subcomplex of $K$ defined as $$SC_K(\sigma):=\bigcup\limits_{u \in \sigma}st_K(\{u\}).$$
\end{defn}

The following results by Barmak will be used in this article.

\begin{lem}[Lemma $3.2$, \cite{Bar13}]\label{lem2.2}
The star cluster of a simplex in independence complex is contractible.
\end{lem}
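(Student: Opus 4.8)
The plan is to cover $SC_K(\sigma)$ by the vertex stars that define it and apply the Nerve Lemma. Write $\sigma = \{v_0, \dots, v_k\}$ and put $K = \ind(G)$. By definition $SC_K(\sigma) = \bigcup_{u \in \sigma} st_K(\{u\})$, so $\{st_K(\{u\})\}_{u \in \sigma}$ is a cover of $SC_K(\sigma)$ by subcomplexes. Each $st_K(\{u\})$ is a cone with apex $u$: if $\tau \cup \{u\} \in K$, then $(\tau \cup \{u\}) \cup \{u\} = \tau \cup \{u\} \in K$, so adjoining $u$ keeps us inside the star. In particular every piece of the cover is contractible, and the Nerve Lemma will identify $SC_K(\sigma)$ with the nerve of this cover, provided all nonempty finite intersections are contractible.

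The crux is to compute these intersections. For a nonempty $A \subseteq \sigma$ I claim
$$\bigcap_{u \in A} st_K(\{u\}) = st_K(A).$$
The inclusion $\supseteq$ is immediate from the definitions. For $\subseteq$, suppose $\tau \cup \{u\} \in K$ for every $u \in A$; I must show $\tau \cup A \in K$, i.e.\ that $\tau \cup A$ is independent in $G$. This is exactly where the structure of independence complexes (equivalently, that $K$ is a flag complex) is used: an edge inside $\tau \cup A$ lies either inside $\tau$ (impossible, as $\tau \in K$), inside $A$ (impossible, as $A \subseteq \sigma \in K$), or between some $t \in \tau$ and $u \in A$; but $\{t,u\} \subseteq \tau \cup \{u\} \in K$ rules out the last case. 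Hence $\tau \cup A$ is independent, proving the claim. Moreover $st_K(A)$ is nonempty (it contains $A$) and is a cone with apex any fixed $a \in A$, hence contractible.

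It then remains to read off the nerve. By the claim, for every nonempty $A \subseteq \sigma$ the intersection $\bigcap_{u \in A} st_K(\{u\}) = st_K(A)$ is nonempty and contractible. Thus the nerve has vertex set $\sigma$ and contains a face on every nonempty subset of $\sigma$; that is, it is the full simplex on $\sigma$, which is contractible. Combining this with the Nerve Lemma yields that $SC_K(\sigma)$ is contractible, as desired.

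I expect the only genuinely nontrivial point to be the intersection identity $\bigcap_{u \in A} st_K(\{u\}) = st_K(A)$, together with its nonemptiness and contractibility for all $A$. This step is false for a general simplicial complex and relies essentially on $\ind(G)$ being flag: the pairwise nonadjacency conditions recorded by the individual vertex stars assemble into joint independence. Once this is established, the passage through the Nerve Lemma is routine.
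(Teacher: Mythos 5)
Your proof is correct: the identity $\bigcap_{u \in A} st_K(\{u\}) = st_K(A)$ holds exactly because $\ind(G)$ is a flag complex, each such intersection is a nonempty cone, and the Nerve Lemma then yields contractibility. The paper states this lemma without proof, importing it from \cite{Bar13}, and your argument is essentially Barmak's original nerve-lemma proof, so the two approaches coincide.
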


\begin{lem}[Lemma $3.3$, \cite{Bar13}]\label{lem:suspension}
Let $K_1$ and $K_2$ be two contractible subcomplexes  of a simplicial complex $K$ such that $K=K_1 \cup K_2$. Then $K \simeq \ \Sigma(K_1 \cap K_2)$, where $\Sigma(X)$ denotes the suspension of space $X$.
\end{lem}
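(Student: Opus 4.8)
The plan is to realise $K$ as a homotopy pushout and then collapse the two contractible pieces. Write $A := K_1\cap K_2$. Because $K_1$ and $K_2$ are subcomplexes with $K_1\cup K_2=K$, the complex $K$ is the pushout of the inclusions $K_1\hookleftarrow A\hookrightarrow K_2$, i.e. $K\cong K_1\cup_A K_2$ as spaces. The crucial structural point is that an inclusion of a subcomplex is always a cofibration (a simplicial pair is a good pair and has the homotopy extension property), so this strict pushout also computes the \emph{homotopy} pushout of $K_1\leftarrow A\rightarrow K_2$, and every quotient that appears below is homotopy equivalent to the corresponding mapping cone.

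First I would collapse $K_1$. Since $(K,K_1)$ is a simplicial pair and $K_1$ is contractible, the quotient map $q\colon K\to K/K_1$ is a homotopy equivalence. Next I identify the quotient: as $K=K_1\cup K_2$ with $K_1\cap K_2=A$, collapsing $K_1$ inside $K$ is the same as collapsing $A$ inside $K_2$, which gives a homeomorphism $K/K_1\cong K_2/A$ (a continuous bijection of finite complexes, hence a homeomorphism). Thus $K\simeq K_2/A$.

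It remains to show $K_2/A\simeq \Sigma A$. Since $A\hookrightarrow K_2$ is a cofibration, $K_2/A$ is homotopy equivalent to the mapping cone $K_2\cup_A CA$ of the inclusion. The mapping cone is a homotopy invariant of its target in the sense that a homotopy equivalence $K_2\simeq \mathrm{pt}$ compatible (up to homotopy) with the maps out of $A$ induces a homotopy equivalence of cones; replacing $K_2$ by a point turns the cone into $\mathrm{pt}\cup_A CA = CA/A$, which is exactly the (unreduced) suspension $\Sigma A$. Chaining these equivalences yields $K\simeq \Sigma(K_1\cap K_2)$. The degenerate case $A=\emptyset$ is consistent with the unreduced convention: then $K$ is the disjoint union of two contractible complexes, so $K\simeq S^0=\Sigma\emptyset$.

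The string of equivalences itself is routine; the part needing genuine care is the homotopy-theoretic bookkeeping, namely verifying that each subcomplex inclusion is truly a cofibration so that both $K\to K/K_1$ is an equivalence and $K_2/A$ may be replaced by the mapping cone, together with keeping track of reduced versus unreduced suspension (the unreduced version is the correct target, as the empty-intersection case confirms). Equivalently, the whole argument can be phrased purely via homotopy pushouts, which are invariant under passing to a levelwise homotopy-equivalent diagram: one has $K\simeq\mathrm{hopushout}(K_1\leftarrow A\rightarrow K_2)\simeq\mathrm{hopushout}(\mathrm{pt}\leftarrow A\rightarrow \mathrm{pt})=\Sigma A$, where the middle equivalence uses $K_1\simeq\mathrm{pt}$, $K_2\simeq\mathrm{pt}$ and the identity on $A$.
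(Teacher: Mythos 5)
Your argument is correct. A point of comparison worth making explicit: the paper does not actually prove Lemma \ref{lem:suspension} --- it is imported from Barmak (Lemma $3.3$ of the cited paper) --- so there is no in-paper proof to match line by line. The closest the paper comes is the proof of Lemma \ref{lem:contractible}, whose base case $n=2$ is exactly this lemma (specialised to an intersection that is a wedge of spheres), and there the authors argue precisely as in your final paragraph: the union is the strict pushout of $K_1 \hookleftarrow K_1\cap K_2 \hookrightarrow K_2$, Remark \ref{rmk:homotopypushout} upgrades this to a homotopy pushout because the maps are subcomplex inclusions, and replacing the two contractible corners of the diagram by points yields the unreduced suspension of the intersection. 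Your primary route --- $K \simeq K/K_1 \cong K_2/(K_1\cap K_2) \simeq \Sigma(K_1\cap K_2)$, with the middle step the evident homeomorphism and the last step the identification of the cofiber of $A \hookrightarrow K_2$ with the mapping cone $K_2 \cup_A CA$ followed by collapsing the contractible $K_2$ --- is a correct, more hands-on version of the same computation. What it buys is that every intermediate space is an explicit quotient of finite complexes; the cost is that the homotopy extension property must be invoked twice (once for $K \to K/K_1$ being an equivalence, once for $K_2/A \simeq K_2\cup_A CA$), whereas the pushout phrasing concentrates all such cofibrancy bookkeeping in the single statement of Remark \ref{rmk:homotopypushout}. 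You are also right to flag the unreduced convention: the degenerate case $K_1\cap K_2=\emptyset$, where $K$ is a disjoint union of two contractible complexes and $\Sigma\emptyset = \bS^0$, confirms that unreduced suspension is the correct reading of the lemma.
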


\begin{lem}[Theorem $3.6$, \cite{Bar13}]\label{lem:cluster}
    Let $G$ be  a graph and $v$ be  a non-isolated vertex of $G$ which is contained in no triangle. Then $N(v)$ is a simplex of $\ind(G)$, and
    $$\ind(G) \simeq \Sigma (st_{\ind(G)}(\{v\}) \cap SC_{\ind(G)}\big(N(v)) ).$$
\end{lem}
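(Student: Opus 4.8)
The plan is to exhibit $\ind(G)$ as a union of two contractible subcomplexes and then quote Lemma~\ref{lem:suspension}. The two natural pieces are the star $K_1 := st_{\ind(G)}(\{v\})$ of the vertex $v$ and the star cluster $K_2 := SC_{\ind(G)}(N(v))$ of its neighborhood. Before this can even be set up, I would first check that $N(v)$ is a simplex: the hypothesis that $v$ lies in no triangle says exactly that no two neighbors of $v$ are adjacent, for if $u,w \in N(v)$ with $u \sim w$ then $\{u,v,w\}$ is a triangle. Hence $N(v)$ is an independent set, so it is a simplex of $\ind(G)$ and $K_2 = SC_{\ind(G)}(N(v))$ is defined; non-isolatedness of $v$ ensures $N(v) \neq \emptyset$, so $K_2$ is nonempty.

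Next I would dispose of the contractibility of the two pieces. The subcomplex $K_1$ is the star of a single vertex, which is a cone with apex $v$ (it is closed under adjoining and deleting $v$), and is therefore contractible. The subcomplex $K_2$ is contractible directly by Lemma~\ref{lem2.2}, since $N(v)$ is a simplex.

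The key step, which I expect to be the crux of the argument, is to verify the covering $\ind(G) = K_1 \cup K_2$. Given any independent set $\sigma$, I would split into two cases according to whether $\sigma$ meets $N(v)$. If $\sigma \cap N(v) = \emptyset$, then no vertex of $\sigma$ is adjacent to $v$, so $\sigma \cup \{v\}$ is again independent and $\sigma \in st_{\ind(G)}(\{v\}) = K_1$. Otherwise choose $u \in \sigma \cap N(v)$; since $\sigma$ is independent we have $\sigma \cup \{u\} = \sigma \in \ind(G)$, whence $\sigma \in st_{\ind(G)}(\{u\}) \subseteq SC_{\ind(G)}(N(v)) = K_2$. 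This shows every simplex of $\ind(G)$ lies in $K_1$ or $K_2$, so $\ind(G) = K_1 \cup K_2$.

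Finally, applying Lemma~\ref{lem:suspension} to the contractible cover $\ind(G) = K_1 \cup K_2$ gives $\ind(G) \simeq \Sigma(K_1 \cap K_2) = \Sigma\big(st_{\ind(G)}(\{v\}) \cap SC_{\ind(G)}(N(v))\big)$, which is the claimed statement. The only genuinely delicate point is the covering verification, since everything else is either a direct appeal to the cited lemmas or the standard fact that a vertex star is a cone; I would take care there to track the roles of the two cases precisely so that the union is literally all of $\ind(G)$ rather than merely cofinal in it.
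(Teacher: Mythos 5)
Your proposal is correct and follows essentially the same route as the proof the paper relies on: the paper states this result without proof, citing Theorem 3.6 of \cite{Bar13}, and Barmak's argument there is precisely your decomposition $\ind(G) = st_{\ind(G)}(\{v\}) \cup SC_{\ind(G)}(N(v))$ into two contractible subcomplexes (the star being a cone with apex $v$, the star cluster contractible by Lemma \ref{lem2.2}) followed by an application of Lemma \ref{lem:suspension}. Your case analysis for the covering and the observation that the no-triangle hypothesis makes $N(v)$ an independent set are exactly the right checks, and they are carried out correctly.
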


 The following  observation directly follows from the definition of independence complexes of graphs.
 
\begin{lem}\label{lem:union}
  Let $G$ be a graph obtained by taking disjoint union of two graphs $G_1$ and $G_2$. Then, $$\ind(G)=\ind(G_1\sqcup G_2) \simeq \ind(G_1) \ast \ind(G_2),$$ where $\ast$ denotes the join operation.
\end{lem}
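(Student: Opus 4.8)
The plan is to prove the statement at the level of simplicial complexes, by showing that $\ind(G_1 \sqcup G_2)$ and $\ind(G_1) \ast \ind(G_2)$ coincide as simplicial complexes; the claimed homotopy equivalence then follows immediately (indeed it is an equality, which is stronger than $\simeq$). First I would recall that for simplicial complexes $K_1, K_2$ on disjoint vertex sets, the join $K_1 \ast K_2$ is the complex whose simplices are exactly the sets $\sigma \cup \tau$ with $\sigma \in K_1$ and $\tau \in K_2$ (allowing $\sigma = \emptyset$ or $\tau = \emptyset$). Here $V(\ind(G_1)) \subseteq V(G_1)$ and $V(\ind(G_2)) \subseteq V(G_2)$ are disjoint, since $G = G_1 \sqcup G_2$ is a disjoint union, so the join is well defined.

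The heart of the argument is a single set-theoretic observation. Because $G$ is the disjoint union of $G_1$ and $G_2$, we have $V(G) = V(G_1) \sqcup V(G_2)$ and every edge of $G$ lies entirely within $V(G_1)$ or entirely within $V(G_2)$; in particular there is no edge joining a vertex of $G_1$ to a vertex of $G_2$. I would use this to show that a subset $I \subseteq V(G)$ induces a collection of isolated vertices in $G$ if and only if $I_1 := I \cap V(G_1)$ is independent in $G_1$ and $I_2 := I \cap V(G_2)$ is independent in $G_2$: any edge lying inside $I$ must be an edge of $G_1[I_1]$ or of $G_2[I_2]$, and no edge is introduced by forming the union $I = I_1 \cup I_2$ precisely because cross edges do not exist.

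Rewriting $I = I_1 \cup I_2$, this says exactly that $I \in \ind(G)$ if and only if $I_1 \in \ind(G_1)$ and $I_2 \in \ind(G_2)$, which is the defining condition for $I$ to be a simplex of $\ind(G_1) \ast \ind(G_2)$. Hence the two complexes have identical simplex sets, giving $\ind(G) = \ind(G_1) \ast \ind(G_2)$ as simplicial complexes, and in particular $\ind(G) \simeq \ind(G_1) \ast \ind(G_2)$. I do not anticipate any genuine obstacle here: the only point requiring verification is the absence of cross edges in a disjoint union, which is immediate from the definition of $\sqcup$, and this is precisely why the text flags the lemma as following directly from the definition of independence complexes.
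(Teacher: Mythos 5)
Your proposal is correct and matches the paper's intent: the paper offers no written proof, asserting the lemma ``directly follows from the definition of independence complexes,'' and your argument---that $I\subseteq V(G_1)\sqcup V(G_2)$ is independent in $G$ if and only if $I\cap V(G_1)$ and $I\cap V(G_2)$ are independent in $G_1$ and $G_2$ respectively, because no cross edges exist---is exactly that definitional verification, yielding equality of simplicial complexes with the simplicial join. This is the same (and indeed slightly sharper) statement the paper has in mind, so there is nothing to add.
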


Now we discuss some tools needed from discrete Morse theory (\cite{f}).

\begin{defn}[Definition $11.1$, \cite{dk}]
A {\it partial matching} on a poset $P$ is a subset $M \subseteq P \times P$ such that
\begin{itemize}
\item[(i)] $(a,b) \in M$ implies $ b \succ a;$ {\it i.e.}, $a < b$ and no $c$ satisfies $ a < c < b $, and
\item[(ii)] each $ a \in P $ belong to at most one element in $M$.
\end{itemize}

\end{defn}
   Note that, $\M$ is a  partial matching on a poset $P$ if and only if 
  there exists  $\A \subset P$ and an injective map $\mu: \A
 \rightarrow P\setminus \A$ such that $\mu(a)\succ a$ for all $a \in \A$.

 \begin{defn}
 	An {\it acyclic matching} is a partial matching  $\M$ on the poset $P$ such that there does not exist a cycle
 	\begin{eqnarray*}
 		\mu(a_1)  \succ a_1 \prec \mu( a_2) \succ a_2  \prec \mu( a_3) \succ a_3 \dots   \mu(a_t) \succ a_t  \prec \mu(a_1), t\geq 2.
 	\end{eqnarray*}
 \end{defn}

For an acyclic partial matching on $P$, those elements of $P$ which do not belong to the matching are said to be 
{\it critical }.

\begin{thm}[Theorem $11.13$, \cite{dk}]\label{acyc3}
Let $\Delta$ be a simplicial complex and $M$ be an acyclic matching on the face poset of $\Delta$. Let $c_i$ denote the number of critical $i$-dimensional cells of $\Delta$ with respect to the matching $M$. Then $\Delta$ is homotopy equivalent to a cell complex $\Delta_c$ with $c_i$ cells of dimension $i$ for each $i \geq 0$, plus a single $0$-dimensional cell in the case where the empty set is also paired in the matching.
\end{thm}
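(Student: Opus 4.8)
The statement is the fundamental theorem of discrete Morse theory, so the plan is to realise the acyclic matching $M$ as a sequence of elementary collapses, each of which preserves homotopy type, after which only the critical cells survive and assemble into the claimed cell complex $\Delta_c$. The atomic operation I would rely on is the elementary collapse: if a simplex $a$ is a \emph{free face} of $\Delta$, meaning that $a$ is properly contained in exactly one simplex $b$ (so $b \succ a$ in the face poset), then the inclusion $\Delta \setminus \{a,b\} \hookrightarrow \Delta$ is a deformation retraction, hence a homotopy equivalence. Every matched pair $(a, \mu(a)) \in M$ is a candidate for such a collapse, since $\mu(a) \succ a$ by the definition of a partial matching; the entire difficulty is to collapse all of these pairs \emph{in a valid order}.

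First I would encode the order. Orient the Hasse diagram of the face poset $P$ by sending every covering relation $a \prec b$ downward (from $b$ to $a$), except for the matched relations $a \prec \mu(a)$, which are reversed to point upward. The hypothesis that $M$ is acyclic is precisely the statement that this directed graph contains no directed cycle; since $P$ is finite, the graph is then a directed acyclic graph and admits a topological ordering. Equivalently, one may package this as a discrete Morse function whose sublevel complexes give a filtration of $\Delta$. This ordering is what licenses a sequential peeling of the matched pairs.

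The main step is then an induction that peels off one matched pair at a time. At each stage I would locate a matched pair $(a,\mu(a))$ for which $a$ is currently a free face, collapse it, and recurse on the smaller complex; by the elementary-collapse step the homotopy type is unchanged throughout. After all matched pairs have been removed, the surviving simplices are exactly the critical ones, and reading the collapses back as a filtration shows that crossing a critical simplex of dimension $i$ attaches an $i$-cell. This produces a cell complex $\Delta_c$ with $c_i$ cells in dimension $i$, homotopy equivalent to $\Delta$. The only bookkeeping subtlety is the empty simplex: if $\emptyset$ is itself matched (to some vertex), that pair collapses as well and contributes no critical cell, so one reinstates a single $0$-cell to record that the space is nonempty and connected to a basepoint, which is exactly the exceptional $0$-cell in the statement.

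The hard part is guaranteeing, at every stage of the induction, that a matched pair whose lower element is a free face actually exists; this is the only place where acyclicity is used in an essential way. I would argue by contradiction: if no remaining matched down-cell $a$ had its partner $\mu(a)$ as its unique coface, then each such $a$ would possess another remaining coface, which in turn is matched downward, and so on. Following these choices produces an alternating chain $\mu(a_1) \succ a_1 \prec \mu(a_2) \succ a_2 \prec \cdots$ in the poset; finiteness forces it to repeat, yielding exactly the forbidden cycle in the definition of an acyclic matching. This contradiction secures the existence of a collapsible pair and drives the induction to completion.
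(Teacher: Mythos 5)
Your strategy cannot work as stated, and the failure is at exactly the step you flag as the hard part. It is not true that an acyclic matching must contain a pair $(a,\mu(a))$ in which $a$ is a free face. Take $\Delta$ to be the boundary of a triangle with vertices $v_1,v_2,v_3$, and match $v_2$ with the edge $v_1v_2$ and $v_3$ with the edge $v_2v_3$, leaving $v_1$ and the edge $v_1v_3$ critical; this matching is acyclic, yet $v_2$ lies in two edges and so does $v_3$, so no collapse is available even at the very first stage. Your contradiction argument breaks down because the ``other coface'' of a matched cell $a$ need not be matched downward --- it can be \emph{critical} (here $v_1v_3$), and then the alternating chain $\mu(a_1)\succ a_1\prec \mu(a_2)\succ\cdots$ simply terminates without ever producing the forbidden cycle. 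Acyclicity constrains directed cycles through matched pairs only; it does nothing to prevent critical cofaces from blocking every collapse.

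The deeper point is that the theorem is not a collapsibility statement and cannot be proved by elementary collapses alone: in the example above $\Delta \simeq \mathbb{S}^1$, which admits no collapse whatsoever, and in general the critical cells do not even form a subcomplex of $\Delta$ (the boundary of a critical cell may consist entirely of matched cells), so there is no subcomplex of ``surviving simplices'' onto which $\Delta$ could deformation retract. The actual proof --- the paper cites Kozlov's Theorem 11.13 rather than reproving it --- runs the induction in the opposite direction: order the cells by a linear extension of the acyclic order obtained by orienting the Hasse diagram as you describe, and build the complex \emph{up} one step at a time. Passing a matched pair leaves the homotopy type unchanged (an internal collapse), while passing a critical $i$-cell attaches an $i$-cell; since at a critical step the attaching map is only controlled up to the homotopy equivalences already performed, one must invoke the homotopy extension property (the gluing lemma for cofibrations) to transport attaching maps along those equivalences. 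This interleaving of collapses and attachments is also why the conclusion is a homotopy equivalence to a CW complex $\Delta_c$, not a deformation retraction onto a subcomplex. Your treatment of the empty simplex (a matched $\emptyset$ accounting for the extra $0$-cell) is the right bookkeeping, but it sits on top of an induction that, as the triangle example shows, never gets started.
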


Following can be inferred from Theorem \ref{acyc3}.

\begin{rmk} \label{acyc4}
If an acyclic matching has critical cells only in a fixed dimension $i$, then $\Delta$ is homotopy equivalent to a wedge of $i$-dimensional spheres.
\end{rmk}

  \section{Independence complex of \texorpdfstring{$K_m \times K_n$}{categorical product of complete graphs}}\label{sec:productofcomplete}
In this section, we compute the independence complex of $K_m \times K_n$ for $m,n \geq 2$.
We first  start by defining an acyclic matching on the face poset of a general simplicial complex; and then use a special case of this matching to prove the result for $\ind(K_m \times K_n)$.

  Let $K$ be a simplicial complex and let $X = \{x_1, \ldots, x_n\}\subseteq V(K)$. The elements of $X$ are ordered as; $
  x_1 < x_2 < \ldots <  x_n.
  $
  
  Let $P$ be the face poset of $(K, \subseteq)$.
  For  $1 \leq i \leq n ,$ define 
  \begin{eqnarray*}
  &&A_{x_i} = \{\sigma \in A_{x_{i-1}}' \ | \ x_i \ \notin \sigma , \ \text{and} \ \sigma \, \cup \{x_i \} \ \in A_{x_{i-1}}'\}, ~\text{where}~ A_{x_0}' =P, \label{am:1}\\
  &&\mu_{x_i} : A_{x_i}  \rightarrow A_{x_{i-1}}' \setminus A_{x_i} ~\text{by} ~  \mu_{x_i}(\sigma) = \sigma \, \cup \{x_i\}~~\text{and}\label{am:2}\\
  &&A_{x_i}'= A_{x_{i-1}}' \setminus \s_{x_i}, ~\text{where}~ \s_{x_i} = A_{x_i}\cup \mu_{x_i} (A_{x_i}).\label{am:3}
  \end{eqnarray*}
  
  We note that by construction, $A_{x_i} \cap A_{x_j} =\emptyset$ whenever $ i \neq j$.
  Let $A = \bigcup \limits_{i=1}^{n} A_{x_i}$ and $\mu_K^{X}: A \rightarrow P \setminus A$ be defined by $\mu_K^X(\sigma) =\mu_{x_i} (\sigma)$, where $x_i$ is the unique element such that $\sigma \in A_{x_i}$.

  Clearly, $\mu_K^{X}$ is  injective  and is therefore, a well defined partial matching on $P$.  It follows from \cite[Proposition 3.1]{NS} that $\mu_K^X$ is an acyclic matching. For the sake of completeness, we give a proof here as well.

  \begin{prop}\label{prop:acyclic}
  	$\mu_K^{X}$ is an acyclic matching on $P$.
  \end{prop}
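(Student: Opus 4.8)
The plan is to suppose that $\mu_K^X$ carries a cycle and to derive a contradiction by producing a quantity that must strictly drop as one traverses it. Everything is driven by an explicit description of the intermediate posets $A_{x_i}'$. First I would record the recursive law that governs them: unwinding the definitions of $A_{x_i}$, $\mu_{x_i}$ and $\s_{x_i}$, a cell $\sigma\in A_{x_{m-1}}'$ is deleted at stage $m$ precisely when its \emph{toggle} $\sigma\,\triangle\,\{x_m\}$ also lies in $A_{x_{m-1}}'$ (one condition covering both the up-match, where $x_m\notin\sigma$, and the down-match, where $x_m\in\sigma$). Hence
$$\sigma\in A_{x_m}'\iff \sigma\in A_{x_{m-1}}'\ \text{and}\ \sigma\,\triangle\,\{x_m\}\notin A_{x_{m-1}}',\qquad(\star)$$
so that $\sigma$ is matched by $\mu_K^X$ along $x_i$ exactly when $i$ is least with both $\sigma$ and $\sigma\,\triangle\,\{x_i\}$ in $A_{x_{i-1}}'$. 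In particular the two cells of a matched pair share a common \emph{stage index} $i$, which I will write as $c(\sigma)=i$.

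From $(\star)$ I would extract the main combinatorial tool. \emph{Claim:} if $c\le m$ and $x_c\in\sigma$, then $\sigma$ and $\sigma\setminus\{x_c\}$ cannot both lie in $A_{x_m}'$. Indeed, if both did, then since $c-1<c\le m$ both would lie in $A_{x_{c-1}}'$; as they differ only in $x_c$, $(\star)$ forces $\sigma\notin A_{x_c}'$, and because $A_{x_m}'\subseteq A_{x_c}'$ this contradicts $\sigma\in A_{x_m}'$. Informally, $A_{x_m}'$ is ``unpaired in the already-processed coordinates $x_1,\dots,x_m$.'' I would then set up the cycle $\mu(a_1)\succ a_1\prec\mu(a_2)\succ\cdots\prec\mu(a_1)$: all $a_j$ have one fixed dimension, each is a lower (up-matched) cell with $\mu(a_j)=a_j\cup\{x_{c(a_j)}\}$, and the covering relation $a_j\prec\mu(a_{j+1})$ together with $a_j\ne a_{j+1}$ forces the deleted vertex to lie in $a_{j+1}$, so that $x_{c(a_{j+1})}\in a_j$ while $x_{c(a_j)}\notin a_j$; in particular $c(a_{j+1})\ne c(a_j)$.

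The heart of the argument is the following dichotomy for a single down-step $\mu(a_{j+1})\succ a_j$: either the stage index strictly drops, $c(a_j)<c(a_{j+1})$, or $a_j$ is critical or a down-matched (upper) cell. Since a cycle contains no critical cells and every $a_j$ in it is an up-matched lower cell, the second alternative never occurs inside the cycle, so the stage indices would strictly decrease all the way around — an impossibility, giving the contradiction and hence acyclicity (and then Remark~\ref{acyc4} applies wherever relevant). To prove the dichotomy I would use the Claim: because $x_{c(a_{j+1})}\in a_j$, if $a_j$ were up-matched at a stage $\ge c(a_{j+1})$ it would have survived stage $c(a_{j+1})$ while still containing $x_{c(a_{j+1})}$, and the Claim (applied to $a_j$ and its deletion of $x_{c(a_{j+1})}$) is exactly what obstructs this. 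I expect the main obstacle to be making this last implication fully rigorous, because the posets $A_{x_i}'$ are \emph{not} order ideals (a face of a surviving cell may already have been matched away), so the descent cannot be read off from downward closure; instead it must be pushed through $(\star)$ by induction on the stage index, the base stage $A_{x_0}'=P$ being genuinely downward closed, where the obstruction forces $a_j$ into a down-match and terminates the branch.
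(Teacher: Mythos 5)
Your preparation is sound: the reformulation $(\star)$ is a correct description of the construction, your toggle Claim is true, matched pairs do share a stage index $c(\cdot)$, and at each down-step $\mu(a_{j+1})\succ a_j$ of a hypothetical cycle one indeed has $x_{c(a_{j+1})}\in a_j$ and $c(a_j)\neq c(a_{j+1})$. The gap is exactly where you suspected it, and it is fatal to the route as designed: the central dichotomy is \emph{false}. The Claim does not obstruct an up-matched $a_j$ at a stage later than $c(a_{j+1})$, because a cell containing $x_{c'}$ survives stage $c'$ precisely when its toggle was deleted at some \emph{earlier} stage, and this genuinely happens. Concretely, take $K$ on vertices $\{x_1,x_2,x_3,u,y\}$ with facets $\{u,y,x_2\}$, $\{u,x_2,x_3\}$, $\{u,x_1,x_3\}$ and $X=\{x_1<x_2<x_3\}$. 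At stage $1$ the cell $\{u\}$ is matched with $\{u,x_1\}$; consequently $a_{j+1}=\{u,y\}$ is up-matched at stage $2$ with $\mu(a_{j+1})=\{u,y,x_2\}$, while $a_j=\{u,y,x_2\}\setminus\{y\}=\{u,x_2\}$ survives stage $2$ (its toggle $\{u\}$ is already gone, so $(\star)$ lets it through) and is up-matched at stage $3$ with $\{u,x_2,x_3\}$. So along the down-step $\mu(a_{j+1})\succ a_j$ the stage index \emph{rises} from $2$ to $3$ even though $a_j$ is an up-matched lower cell: the stage index is not monotone along directed $\mu$-paths, your strict-descent contradiction evaporates, and no induction through $(\star)$ can rescue a local statement that is simply false.

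The paper's proof repairs this globally rather than locally, by \emph{minimality} of the stage index over the whole cycle: let $x$ be the least element of $X$ with some $\sigma_1\in A_x$ among the cycle's cells. Since every cell of the cycle is matched at a stage $\geq$ that of $x$, every such cell (and its partner) lies in $A_{x'}'$ for the stage $x'$ preceding $x$ --- and this is where your Claim finally bites, because at the minimal index the ``toggle already removed'' escape used in the counterexample above is unavailable. After the step at $\sigma_1$ the vertex $x$ is present in $\sigma_2$; if it is never dropped, closing the loop gives $\sigma_1=\mu(\sigma_t)\setminus\{x\}$, hence $\mu(\sigma_t)=\sigma_1\cup\{x\}=\mu(\sigma_1)$ and $\sigma_t=\sigma_1$ by injectivity; if it is first dropped at $\sigma_l=\mu(\sigma_{l-1})\setminus\{x\}$, then both $\sigma_l$ and $\sigma_l\cup\{x\}=\mu(\sigma_{l-1})$ lie in $A_{x'}'$, so by definition $\sigma_l\in A_x$ and $\mu(\sigma_l)=\mu(\sigma_{l-1})$, forcing $\sigma_l=\sigma_{l-1}$. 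Either way the cells of the cycle fail to be distinct, a contradiction. So your machinery ($(\star)$, the Claim, the bookkeeping of $c(\cdot)$) is all usable, but the strict-descent dichotomy must be replaced by this minimal-index analysis, which is essentially the paper's argument.
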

  \begin{proof}
  	Let there exist distinct cells $\sigma_1, \ldots, \sigma_t \in A$ such that $\mu_K^{X}(\sigma_i) \succ 
  	\sigma_{i+1 \ (\text{mod} \ t)}, 1 \leq i \leq t$.
  	
  	Let $x \in X$ be the least element such that $\{\sigma_1, \ldots, \sigma_t\} \cap A_x \neq \emptyset$.
  	Without loss of generality, assume that $\sigma_1 \in A_x$, {\it i.e.}, $x \notin \sigma_1$ and $\mu_K^X(\sigma_1) = \sigma_{1} \cup \{x\}$. $\mu_K^X(\sigma_1) \succ \sigma_2$ and  $\sigma_1 \neq \sigma_2$ implies that there exists $x' \in \mu_1(\sigma_1), x' \neq x$ such that 
  	$\sigma_2 = \mu_K^{X}(\sigma_1) \setminus \{x'\}$. We now have the following two possibilities:
  	\begin{enumerate}
  		\item $x \in \sigma_{t}$. 
  		
  		$\sigma_1 \in A_x$ implies that $x \notin \sigma_1$.  $x \in \sigma_t$ implies that $x \in \mu_K^X(\sigma_t)$. Therefore, $\sigma_1 = \mu_K^X(\sigma_t) \setminus \{x\}$ which implies that $\mu_K^X(\sigma_1) = \mu(\sigma_t)$ a contradiction, since $\sigma_1 \neq \sigma_t$.
  		
  		\item $x \notin \sigma_{t}$,  {\it i.e.}, there exists  a least $l \in \{2, \ldots, t\}$ such that $x \notin \sigma_l$.
  		
  		$x \in \mu_K^X(\sigma_{l-1})$ and $x \notin \sigma_l$ implies that $\sigma_l = \mu_K^X(\sigma_{l-1}) \setminus \{x\}$ {\it i.e.}, $\mu_K^X(\sigma_{l-1}) = \sigma_l \cup \{x\}$. Since $\sigma_l$ and $\mu_K^X(\sigma_{l-1}) \notin A_i \cup \mu_{x_i}(x_i) \ \forall \ i < x$, from the definition $\sigma_l \in A_x$. This implies that $\mu_K^X(\sigma_l) = \sigma_l \cup \{x\} = \mu_K^X(\sigma_{l-1})$, which implies that $\sigma_l = \sigma_{l-1}$, a contradiction.
  	\end{enumerate}
  	Therefore,  $\mu_K^{X}$ is an acyclic matching on $P$.
  \end{proof}
 
  Let $m,n \geq 2$ and $V(K_m)=\{a_1,\ldots,a_m\}$, $V(K_n)= \{b_1,\ldots,b_n\}$. 
  
  \begin{rmk}\label{rmk:maximalsets}
  Observe that the maximal simplices  of $\ind(K_m\times K_n)$ are only of  the following two types:
 
  \begin{enumerate}
 \item  sets of the form $\{(a_i,b_j) \ | \ j \in [n]\}$, where $i \in [m]$, and
 \item  sets of the form $\{(a_i,b_j) \ | \ i \in [m]\}$, where $j \in [n]$.
 \end{enumerate}
 \end{rmk}
 
  Using the above classification of simplices of $\ind(K_m\times K_n)$, we  prove the following result.

\begin{prop}\label{thm:indkmkn}
	Let $m,n \geq 2$. Then
	$$\emph{\ind}(K_m \times K_n) \simeq \bigvee\limits_{(m-1)(n-1)} \bS^1.$$
\end{prop}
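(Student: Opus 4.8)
The plan is to apply the acyclic matching $\mu_K^X$ of Proposition \ref{prop:acyclic} to $K=\ind(K_m\times K_n)$ for a carefully chosen ordered vertex set $X$, and to read off the homotopy type from the critical cells via Remark \ref{acyc4}. The advantage of this route is that acyclicity is automatic from Proposition \ref{prop:acyclic}, so the entire argument reduces to bookkeeping of which faces remain unmatched. By Remark \ref{rmk:maximalsets}, every face of $K$ of dimension at least $1$ lies inside a single \emph{row} $R_i=\{(a_i,b_j):j\in[n]\}$ or a single \emph{column} $C_j=\{(a_i,b_j):i\in[m]\}$, and the only faces lying in two of these maximal simplices are the singletons $\{(a_i,b_j)\}=R_i\cap C_j$. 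This is the combinatorial picture I would exploit: the complex is a union of simplices glued only along these vertices.

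Concretely, I would take $X$ to consist of the first row together with the first column, $X=\{(a_i,b_1):i\in[m]\}\cup\{(a_1,b_j):j\in[n]\}$, ordered as $(a_1,b_1)<(a_2,b_1)<\dots<(a_m,b_1)<(a_1,b_2)<\dots<(a_1,b_n)$. First I would check that matching on $(a_1,b_1)$ pairs every face contained in $R_1$ or $C_1$, by adjoining or deleting $(a_1,b_1)$; in particular the empty set is matched with $\{(a_1,b_1)\}$, and every vertex of $R_1\cup C_1$ is consumed. Next, processing $(a_i,b_1)$ for $i\ge 2$ should collapse the remainder of row $R_i$: at that stage the available faces are exactly the nonempty subsets of $\{(a_i,b_j):j\ge 2\}$ together with their unions with $(a_i,b_1)$, so every horizontal face of $R_i$ and every remaining vertex $(a_i,b_j)$ with $j\ge 2$ gets matched. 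After the first column is exhausted, the surviving faces are precisely the vertical faces of size at least $2$ inside the columns $C_j$ with $j\ge 2$.

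Finally, processing $(a_1,b_j)$ for $j\ge 2$ should pair each size-$\ge 2$ subset of $\{(a_i,b_j):i\ge 2\}$ with its union with $(a_1,b_j)$; the faces that escape are exactly the edges $\{(a_1,b_j),(a_i,b_j)\}$ with $2\le i\le m$, since the only possible partner $\{(a_i,b_j)\}$ was already consumed in the first-column phase. Because $X$ contains no vertex $(a_i,b_j)$ with $i,j\ge 2$, no later stage can touch these edges. Hence the critical cells are exactly the $(m-1)(n-1)$ edges $\{(a_1,b_j),(a_i,b_j)\}$ with $2\le i\le m$ and $2\le j\le n$, all of dimension $1$, and the empty set is matched. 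Theorem \ref{acyc3} together with Remark \ref{acyc4} then yields $\ind(K_m\times K_n)\simeq\bigvee_{(m-1)(n-1)}\bS^1$, the single extra $0$-cell from the paired empty set serving as the wedge point.

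The main obstacle is the bookkeeping in the middle steps: I must verify that at each stage the set $A_{x_i}$ is precisely as described, that no face is prematurely removed or doubly claimed (the delicate points being the shared singletons $R_i\cap C_j$ and the shared empty set), and that the surviving edges truly cannot be rematched once $X$ is exhausted. By contrast, acyclicity requires no work, being guaranteed by Proposition \ref{prop:acyclic}; and the fact that all critical cells lie in one dimension is exactly what lets Remark \ref{acyc4} deliver a wedge of circles, with the count pinned down by the number of critical $1$-cells.
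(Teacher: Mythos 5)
Your proposal is correct and follows essentially the same route as the paper: the paper applies the matching of Proposition \ref{prop:acyclic} to the same vertex set $J=\{(a_1,b_i)\,:\,1\le i\le n\}\cup\{(a_i,b_1)\,:\,2\le i\le m\}$, only ordered row-first rather than column-first, so its critical cells come out as the transposed family $\{(a_i,b_1),(a_i,b_j)\}$ with $2\le i\le m$, $2\le j\le n$. Your bookkeeping of the three phases (including the matched empty set and the singletons consumed in the first-column phase) is accurate, and the conclusion via Theorem \ref{acyc3} and Remark \ref{acyc4} matches the paper's.
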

\begin{proof}
Let $I : = \ind(K_m \times K_n)$ and let $J=\{(a_1, b_i) ~|~  1 \leq i \leq n\}  \cup \{(a_i, b_1) ~| \ 2 \leq i \leq m\} \subseteq V(I)$. Further, let $P_{m,n}$ be the face poset of $(I, \subseteq)$.  We define the ordering on $J$ as follows:
\begin{equation*}
	(a_1, b_1) < \ldots < (a_1, b_n) < (a_2, b_1) < {(a_3, b_1)} < \ldots  < (a_m, b_1).
\end{equation*}

Let $\mu_{I}^{J}$ be the matching defined as in the beginning of this section with respect to the ordering of elements of $J$ given as above. From Proposition \ref{prop:acyclic} $\mu_{I}^J$ is an acyclic matching. Let $C$ be the set of critical cells for the  matching $\mu_I^{J}$.

\begin{claim}\label{claim:criticalcell}

C= $\{\{(a_i, b_1), (a_i, b_j)\} \  | \   2 \leq i \leq m, 2 \leq  j \leq n \}$.
\end{claim}

\begin{proof}[Proof of Claim \ref{claim:criticalcell}] In this proof, for the convenience of notation,  we  denote  $\mu_I^J$ by $\mu$. 

Here, we first show that every element of $C$ is critical. Let $i \in \{2, \ldots, m\}$ and $j \in \{2, \ldots, n\}$. First observe that $\mu(\{(a_i, b_j)\}) = \{(a_1, b_j), (a_i, b_j)\}$. Since $i, j \geq 2$, it follows from the definition of $\mu$ that $\{(a_i,b_1),(a_i,b_j)\}$ is a critical cell.

Now, let $\sigma \in I$ be a critical cell. Note that $\mu(\{\emptyset\}) = \{(a_1, b_1)\}$,  therefore $\sigma \neq \{(a_1, b_1)\}$.  Since for each $j \geq 2,\  \mu(\{(a_1, b_j)\}) = \{(a_1, b_j),(a_1, b_1) \}$; and for each $i \geq 2$ and $k \geq 1$, $\mu(\{(a_i, b_k)\} ) = \{(a_i, b_k), (a_1, b_k)\}$, we thus conclude that $\sigma$ has at least two elements. 
From Remark \ref{rmk:maximalsets}, either  $\sigma = \{(a_{i_1}, b_{j}), \ldots,(a_{i_t}, b_{j})\}$ for some fixed $j  \in [n]$ and $t \geq 2$ or 
$\sigma = \{(a_i, b_{j_1}), \ldots,(a_i, b_{j_l})\}$ for some fixed $i \in [m]$ and $l \geq 2$.

Suppose  $\sigma = \{(a_{i_1}, b_{j}), \ldots,(a_{i_t}, b_{j})\}$ for some $j  \in [n]$ and $t \geq 2$. If  $(a_1, b_j) \notin \sigma $, then $\mu(\sigma) = \sigma \cup \{(a_1, b_j)\}$; and if $(a_1, b_j)  \in \sigma $, then $\sigma = \mu(\sigma \setminus \{(a_1, b_j)\})$, which  contradicts that $\sigma$ is a critical cell. Therefore, $\sigma = \{(a_i, b_{j_1}), \ldots,(a_i, b_{j_l})\}$ for some $i \in [m]$ and $l \geq 2$.

Note that, if $(a_i, b_1) \notin \sigma$ then $\mu(\sigma) = \sigma \cup \{(a_1, b_1)\}$, which is again a contradiction. Therefore, $(a_i, b_1) \in \sigma$. Further, if $i=1$, then $\sigma = \mu(\sigma \setminus \{(a_1, b_1)\})$. Therefore, $\sigma = \{(a_i, b_{j_1}), \ldots,(a_i, b_{j_l})\}$ for some $i \in \{2,\ldots,m\}$, $l \geq 2$ and $(a_i, b_1) \in \sigma$.

To prove Claim \ref{claim:criticalcell}, it now suffices to show that $|\sigma|=2$. Suppose $|\sigma| \geq 3$. Since $|\sigma \setminus \{(a_i, b_1)\}| \geq 2$ and $i\geq 2$, by definition of $\mu$, we have $\mu(\sigma \setminus \{(a_i, b_1)\})=\sigma$, which is a contradiction to the fact that $\sigma$ is critical and therefore result follows.
\end{proof}

 From Claim \ref{claim:criticalcell}, all the critical cells for matching $\mu$ are of the same dimension, {\it i.e.}, one dimensional. Moreover, the cardinality of the set $C$ is $(m-1)(n-1)$. Therefore result follows from Remark \ref{acyc4}.
\end{proof}


\begin{rmk} Observe that  the graph $\underbrace{K_2 \times \ldots \times K_2}_{(r-1)\text{-copies}} \times K_n$ is isomorphic to $2^{r-2}$ disjoint copies of $K_2 \times K_n$. Therefore, using Lemma \ref{lem:union}, we get

$$\ind(\underbrace{K_2 \times \ldots \times K_2}_{(r-1)\text{-copies}} \times K_n) \simeq \bigvee\limits_{(n-1)^{2^{r-2}}} \mathbb{S}^{2^{r-1}-1}.$$ 

It is thus natural to ask if one can generalise the Proposition \ref{thm:indkmkn} to $r$-fold product of complete graphs for $r\geq 3$, {\it i.e.}, if the independence complexes of $r$-fold product of complete graphs are homotopy equivalent to wedge sum of spheres. 
We strongly believe that the independence complexes of $r$-fold product of complete graphs are homotopy equivalent to wedge of spheres of dimension $2^{r-1}-1$.
\end{rmk}

\noindent \begin{minipage}[m]{.35\textwidth}
    {In support of our intuition, we present  our computer based computations for the Betti numbers, denoted $\beta_i$, of the independence complexes of $K_2 \times K_3 \times K_n$ in Table \ref{table:3}.}
\end{minipage}
\begin{minipage}[m]{0.05\textwidth}
{ ~}
\end{minipage}
\begin{minipage}[m]{.6\textwidth}
{\begin{table}[H]
	\centering
	\begin{tabular}{ |c|c|c|c|c| }
		\hline 
		{\bf $n$}  & {\bf $\beta_3$} & {\bf $\beta_i$, $i \neq 3$} \\ 
		\hline \hline
		2& $4$ & $0$ \\ 
		\hline
		3& $14$ & $0$ \\ 
		\hline
		$4$ & $30$ &  $0$ \\ 
		\hline
		$5$ &  $52$&  $0$\\ 
		\hline
			$6$ & $80$ &  $0$\\ 
		\hline
	\end{tabular}
	\caption{Betti numbers of  {\bf $\ind(K_2 \times K_3 \times K_n$)}}
	\label{table:3}
\end{table}}
\end{minipage}

\pagebreak
Based on our computations, we propose the following conjecture 

\begin{conj} For $n \geq 2$, $$\ind(K_2 \times K_3 \times K_n)\simeq \bigvee\limits_{(n-1)(3n-2)} \mathbb{S}^3.$$
	\end{conj}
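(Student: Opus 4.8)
The plan is to prove the conjecture $\ind(K_2 \times K_3 \times K_n) \simeq \bigvee_{(n-1)(3n-2)} \mathbb{S}^3$ by constructing an explicit acyclic matching on the face poset of $I := \ind(K_2 \times K_3 \times K_n)$, exactly in the spirit of Proposition \ref{thm:indkmkn}, so that all critical cells land in a single dimension (here dimension $3$) and Remark \ref{acyc4} applies. First I would set $V(K_2) = \{a_1,a_2\}$, $V(K_3) = \{c_1,c_2,c_3\}$, $V(K_n) = \{b_1,\dots,b_n\}$ and understand the structure of the maximal independent sets of $K_2 \times K_3 \times K_n$. A set $S$ of triples is independent iff no two of its elements differ in all three coordinates; the cleanest way to see the facets is to note that, just as in Remark \ref{rmk:maximalsets}, a maximal independent set must agree on (at least) one coordinate across all its members, or more precisely must fail to be ``totally spread'' in some coordinate. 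Cataloguing these facets is the necessary bookkeeping step: the dimension $3$ of the critical cells and the expected count $(n-1)(3n-2)$ should both be readable off this catalogue.

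Next I would impose a linear order on a carefully chosen subset $X \subseteq V(I)$ of vertices and invoke the matching $\mu_I^X$ from the start of Section \ref{sec:productofcomplete}, which is acyclic by Proposition \ref{prop:acyclic}. The natural analogue of the choice $J$ in Proposition \ref{thm:indkmkn} is to take $X$ to be the vertices lying in the ``first slices'' of the $K_2$ and $K_3$ factors, i.e. those of the form $(a_1,c_j,b_k)$ together with $(a_2,c_1,b_k)$ and perhaps $(a_2,c_2,b_k)$, ordered lexicographically. The goal in selecting $X$ and its order is that the matching should successively absorb all cells except a well-understood family of $3$-dimensional critical cells. I would then mimic the proof of Claim \ref{claim:criticalcell}: show first that a specific family of $3$-cells is critical (by checking that each such $\sigma$ is neither matched up nor matched down), and second that every critical cell must belong to this family (by arguing that any cell outside it can be paired, using the order on $X$ to locate the smallest relevant vertex and the facet structure to verify independence is preserved when adding or removing it).

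The final step is purely enumerative: count the critical $3$-cells and verify the total is $(n-1)(3n-2) = 3n^2 - 5n + 2$, then conclude by Remark \ref{acyc4} that $I$ is homotopy equivalent to a wedge of $(n-1)(3n-2)$ copies of $\mathbb{S}^3$. The count should decompose as a sum over the ``shape'' of the critical cells relative to the three factors, and matching it against the conjectured polynomial (which already agrees with the table: $4, 14, 30, 52, 80$ for $n = 2,\dots,6$) is a good internal consistency check.

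The hard part will be the simultaneous design of $X$ and its ordering so that the critical cells are forced to be exactly $3$-dimensional and nothing survives in dimensions $0,1,2$; in the two-factor case of Proposition \ref{thm:indkmkn} one vertex-direction of matching sufficed to kill everything but one family, whereas with three factors the facets are more varied and a single pass of $\mu_I^X$ may leave critical cells in several dimensions. It is likely that the matching must be built in stages (matching away low-dimensional cells using the $K_3$-direction first, then the $K_2$-direction, or iterating $\mu$ over several vertex groups and checking acyclicity of the composite), and verifying that the composite matching is still acyclic and that the surviving cells are uniformly $3$-dimensional will require the most care. An alternative fallback, should a clean single matching prove elusive, is to use Lemma \ref{lem:cluster} or a fold/deletion argument to peel off one $K_n$-slice at a time and set up an induction on $n$ whose inductive step reproduces the increment $(3n-2) - (3(n-1)-2) + \text{(lower terms)}$ dictated by the polynomial $3n^2 - 5n + 2$.
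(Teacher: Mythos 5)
You should first note that the statement you were asked to prove is labelled a \emph{conjecture} in the paper: the authors give no proof of it at all, only computer calculations of Betti numbers (Table \ref{table:3}) that agree with the predicted count $(n-1)(3n-2)$ in dimension $3$. So there is no paper proof to match; your proposal has to be judged as a standalone argument, and as such it is a research programme rather than a proof. Every genuinely hard step --- the choice of the vertex set $X$ and its order, the identification of the critical cells, the verification that nothing survives in dimensions $0,1,2$, and the count --- is deferred in your own words (``the hard part will be\dots''), which is precisely the part the authors themselves could not carry out.

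Beyond incompleteness, there is a concrete error in the one structural claim you do make. You assert that, as in Remark \ref{rmk:maximalsets}, a maximal independent set of $K_2\times K_3\times K_n$ ``must agree on (at least) one coordinate across all its members.'' This is false for three or more factors: independence is a \emph{pairwise} condition (two vertices are non-adjacent iff they agree in \emph{some} coordinate), and different pairs may agree in different coordinates. For instance $\{(a_1,c_1,b_1),\,(a_1,c_2,b_2),\,(a_2,c_1,b_2)\}$ is independent --- the first two share $a_1$, the first and third share $c_1$, the last two share $b_2$ --- yet no coordinate is constant on the set. So the clean two-type facet dichotomy that drives the proof of Claim \ref{claim:criticalcell} in the two-factor case simply does not exist here, and the ``catalogue of facets'' from which you propose to read off the dimension and the count is not available in the form you assume; this is exactly why $\ind(K_m\times K_n)$ is a wedge of circles while the three-fold product is left open. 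Two further cautions: building the matching ``in stages'' does not come for free, since a union of acyclic matchings need not be acyclic and Proposition \ref{prop:acyclic} only covers a single pass of $\mu_K^X$ (one would need a patchwork-type lemma, which the paper does not state); and your fold-based fallback is not obviously workable either, as $K_2\times K_3\times K_n$ has no evident pair of vertices with $N(u)\subseteq N(u')$ to which Lemma \ref{lem:folding} applies. The strategy is reasonable in spirit, but as it stands the gap is genuine, and closing it would constitute new mathematics beyond the paper.
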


  \section{Independence complex of \texorpdfstring{$M_r(K_n)$}{}}\label{sec:mycielskian}
 
 This section is devoted to the computation of independence complexes of Mycielskian of graphs. To start with, we compute  $\ind(M_2(G))$ for any graph $G$. We then focus on the generalised Mycielskian of graphs, and determine the homotopy type of \ind$(M_r(K_n))$ for any $n$ and $r\ge 2$.

 \begin{thm} \label{thm:requal2}
 	For any graph $G$, 
 	\ind$(M_2(G)) \simeq \Sigma (\ind(G))$. 
 	
 \end{thm}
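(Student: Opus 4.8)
The plan is to decompose $\ind(M_2(G))$ around the apex vertex and apply Barmak's suspension lemma (Lemma \ref{lem:suspension}). First I would record the structure of $M_2(G)$ explicitly from Definition \ref{defn:mycielskian}: writing $u_g := (g,0)$ for the bottom layer, $v_g := (g,1)$ for the middle layer, and $z$ for the single vertex obtained by identifying all the $(g,2)$, the adjacencies coming from $L_2$ (edges $(0,1),(1,2)$ and the loop at $0$) are $u_g \sim u_{g'}$ and $u_g \sim v_{g'}$ exactly when $g \sim g'$ in $G$, while $z \sim v_g$ iff $g$ is non-isolated and $z \not\sim u_g$ for all $g$. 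In particular the middle layer $W := \{v_g : g \in V(G)\}$ is an independent set, and $z$ lies in no triangle.

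Then set $K_1 := st_{\ind(M_2(G))}(\{z\})$ and $K_2 := \ind(M_2(G) - z)$. Every face of $\ind(M_2(G))$ either contains $z$ (hence lies in $K_1$) or not (hence lies in $K_2$), so $\ind(M_2(G)) = K_1 \cup K_2$; and a face lies in $K_1 \cap K_2$ iff it avoids $z$ yet stays independent after adjoining $z$, i.e. $K_1 \cap K_2 = lk_{\ind(M_2(G))}(\{z\})$. The complex $K_1$ is contractible, being the closed star of a vertex (a cone with apex $z$). To invoke Lemma \ref{lem:suspension} it then remains to prove that $K_2$ is contractible and to identify $K_1 \cap K_2$ up to homotopy with $\ind(G)$.

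The contractibility of $K_2 = \ind(M_2(G)-z)$ is the crux, and it is exactly where the star cluster lemma does the work. I claim every maximal independent set $\tau$ of $M_2(G)-z$ meets $W$: if $\tau \cap W = \emptyset$ then $\tau$ is a nonempty independent set of the bottom copy, and choosing $u_g \in \tau$, the vertex $v_g$ is non-adjacent to every element of $\tau$ (since $v_g \sim u_{g'}$ forces $g \sim g'$, impossible for $u_{g'} \in \tau$ because $\tau$ is independent and contains $u_g$), contradicting maximality. Hence every maximal face of $\ind(M_2(G)-z)$ lies in $st(v_g)$ for some $v_g \in W$, so $SC_{\ind(M_2(G)-z)}(W) = \ind(M_2(G)-z)$, and Lemma \ref{lem2.2} gives the contractibility of $K_2$.

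Finally I would identify $K_1 \cap K_2 = lk(\{z\})$ with $\ind(G)$: a face of this link avoids $z$ and every neighbor of $z$, so its admissible vertices are the bottom layer together with the $v_h$ for isolated $h$, and such $v_h$ are isolated in the induced subgraph. When $G$ has no isolated vertex the link is literally $\ind(G)$ on the bottom copy; in general it is $\ind$ of $G$ together with some isolated $v_h$, which by Lemma \ref{lem:union} is $\ind(G)$ joined with a simplex and hence homotopy equivalent to $\ind(G)$ (both being contractible in that case, an isolated vertex being a cone point). Thus $lk(\{z\}) \simeq \ind(G)$, and Lemma \ref{lem:suspension} yields $\ind(M_2(G)) = K_1 \cup K_2 \simeq \Sigma(K_1 \cap K_2) \simeq \Sigma(\ind(G))$, as desired. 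I expect the main obstacle to be the contractibility of the deletion $K_2$; the remaining steps are bookkeeping about the apex's neighborhood and the harmless effect of isolated vertices.
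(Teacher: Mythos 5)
Your proof is correct, but it takes a genuinely different route from the paper's. The paper applies Lemma \ref{lem:cluster} (Barmak's star-cluster suspension theorem) directly at the apex $w$: since $N(w)$ lies inside the independent middle layer, $w$ is in no triangle, so $\ind(M_2(G)) \simeq \Sigma\big(st_{\ind(M_2(G))}(w) \cap SC_{\ind(M_2(G))}(N(w))\big)$, and the remaining work is to identify this intersection with the bottom copy of $\ind(G)$. You instead bypass Lemma \ref{lem:cluster} and use the star/deletion decomposition $\ind(M_2(G)) = st(z) \cup \ind(M_2(G)-z)$, feed it into Lemma \ref{lem:suspension}, and prove the deletion contractible by showing it equals the star cluster of the whole middle layer $W$ (every maximal independent set missing $W$ can absorb some $v_g$) — in effect you inline the proof of Barmak's Theorem 3.6 in this special case, using only Lemmas \ref{lem2.2} and \ref{lem:suspension}. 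What each buys: the paper's route is shorter because the packaged lemma hands over both contractible pieces at once, but its identification of $st(w)\cap SC(N(w))$ with $\ind(G)$ is the fiddlier combinatorial step, and as written it tacitly assumes $G$ has no isolated vertices — it asserts $N(w)=\{(v_1,1),\dots,(v_n,1)\}$, which fails if some $v_i$ is isolated, and Lemma \ref{lem:cluster} requires $w$ non-isolated, i.e., $E(G)\neq\emptyset$ (in these degenerate cases both sides are contractible anyway). Your decomposition makes the intersection the link $lk(z)=\ind(M_2(G)-N[z])$, which is immediate to identify, and you explicitly dispose of the isolated-vertex case — a point of care the paper's proof glosses over — at the modest cost of the extra contractibility argument for the deletion. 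Incidentally, that contractibility also follows from Lemma \ref{lem:folding}: in $M_2(G)-z$ one has $N((g,1))\subseteq N((g,0))$, so folding away the bottom layer leaves the simplex on $W$.
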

\begin{proof}

Let $V(G) = \{v_1, \ldots, v_n\}$ and let $w = (v_1,2) = \ldots = (v_n, 2)$. Let $K= st_{\ind(M_2(G))}(w) \cap SC_{\ind(M_2(G))}(\{(v_1,1), \ldots, (v_n,1)\})$.  Since $N(w)=\{(v_1,1), \ldots, (v_n,1)\}\in \ind(M_2(G))$, Lemma \ref{lem:cluster} implies that 
  	\begin{center}
  $	\ind(M_2(G)) \simeq \Sigma (K).$
  \end{center}
  	
  Let $H$ be the subgraph of $M_2(G)$ induced by $\{(v_1, 0), \ldots, (v_n, 0)\}$. Clearly, $H \cong G$ and therefore $\ind(H) \cong \ind(G)$. We now show that $K = \ind(H)$.
    
   Let $\sigma \in \ind(H)$ and $(v_i,0) \in \sigma$, then $N((v_i,0)) \cap \sigma = \emptyset$. Since $N((v_i,1)) \subseteq N((v_i,0)) \cup \{w\}$,  $N((v_i,1)) \cap \sigma = \emptyset$ thereby implying that $\sigma \in $ $ st_{\ind(M_2(G))}(\{(v_i,1)\})$. Since $\sigma \subseteq V(H)$, we see that $\sigma \cup \{w\} \in \ind(M_2(G))$. Therefore, 
  	$\sigma \in st_{\ind(M_2(G))}(\{w\}) \cap SC_{\ind(M_2(G))}(\{(v_1,1), \ldots, (v_n,1)\})$ and hence $\ind(H) \subseteq K$.
  	
  	Now suppose that $\sigma \in K$. For each $i$, $w$ is adjacent to  $(v_i, 1)$ in $M_2(G)$, therefore $\sigma \cap \{w, (v_i,1)\} = \emptyset $ for all $i$, and hence $K \subseteq \ind(H)$.
  \end{proof}

    We now fix some notations and list a few  results which will be used in this section for the computation of the independence complex of the generalised Mycielskian of complete graphs. 
    
    For a vertex $v $ of a graph $G$, $N[v] := N(v) \cup \{v\}$. Also, if $A \subseteq V(G)$, then $N(A) :=\bigcup\limits_{v \in A} N(v)$ and $N[A] := \bigcup\limits_{v \in A} N[v]$.

\begin{lem}[Lemma $3.4$, \cite{ea1}]\label{lem:folding} 
  Let $G$ be a graph and $u, u^\prime \in V(G)$ such that $N(u)\subseteq N(u^\prime)$. Then, $$\ind(G) \simeq \ind(G\setminus u^\prime).$$
 \end{lem}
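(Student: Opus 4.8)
The plan is to exhibit $\ind(G)$ as collapsing onto the subcomplex $\ind(G\setminus u')$ by means of an explicit acyclic matching built from the ``twin'' vertex $u$. First I would dispose of degenerate cases: if $u'$ is looped it lies in no independent set, so $\ind(G)=\ind(G\setminus u')$ as complexes and there is nothing to prove. If $u'$ is not looped, then from $N(u)\subseteq N(u')$ one checks that $u$ is not looped either and that $u\not\sim u'$ (otherwise $u'\in N(u)\subseteq N(u')$ would force $u'$ to be looped). Thus in the main case $u\neq u'$, $u\not\sim u'$, and $N(u)\subseteq N(u')$. Set $K=\ind(G)$ and observe that the simplices of $\ind(G\setminus u')$ are precisely the faces of $K$ not containing $u'$, since a subset of $V(G)\setminus\{u'\}$ is independent in $G$ exactly when it is independent in $G\setminus u'$.

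The heart of the argument is the matching, defined only on the faces of $K$ that contain $u'$: pair each such face $\sigma$ with $\sigma\cup\{u\}$ when $u\notin\sigma$, and with $\sigma\setminus\{u\}$ when $u\in\sigma$. This toggling of the single vertex $u$ is where the hypothesis is used. If $u'\in\sigma$ and $\sigma\in K$, then $\sigma$ is independent containing $u'$, so $\sigma\cap N(u')=\emptyset$; since $N(u)\subseteq N(u')$ this forces $\sigma\cap N(u)=\emptyset$, and as $u\not\sim u'$ we conclude $\sigma\cup\{u\}$ is again independent. Hence the partner always exists and is unique, so the matching is a perfect matching on the set of faces containing $u'$, and the critical cells are exactly the faces avoiding $u'$, i.e. the simplices of $\ind(G\setminus u')$.

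To finish, I would check acyclicity (a matching generated by adding or deleting one fixed element within the subposet of faces containing $u'$ admits no alternating cycle, by the same bookkeeping as in Proposition \ref{prop:acyclic}) and then invoke the collapse refinement of discrete Morse theory: since the matching is acyclic and its critical cells form the subcomplex $\ind(G\setminus u')$, the induced matched pairs realize a sequence of elementary collapses, giving $\ind(G)\searrow \ind(G\setminus u')$ and in particular $\ind(G)\simeq\ind(G\setminus u')$ (cf.\ Theorem \ref{acyc3} and \cite{dk}). The main obstacle is precisely the verification that the matching is perfect on the faces containing $u'$ — that is, that $\sigma\cup\{u\}\in K$ whenever $u'\in\sigma$ — which is exactly the step where $N(u)\subseteq N(u')$ enters, together with the passage from ``critical cells form a subcomplex'' to an honest simplicial collapse rather than a mere equivalence to an abstract CW complex.

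As a cross-check, the same conclusion follows by a purely topological gluing: writing $K=\ind(G\setminus u')\cup st_K(\{u'\})$, the star $st_K(\{u'\})$ is a cone with apex $u'$ hence contractible, while the intersection $\ind(G\setminus u')\cap st_K(\{u'\})=lk_K(u')=\ind(G\setminus N[u'])$ is contractible because $N(u)\subseteq N(u')$ makes $u$ an isolated vertex of $G\setminus N[u']$, so $lk_K(u')$ is itself a cone with apex $u$. Collapsing this contractible subcomplex then yields $\ind(G)\simeq \ind(G\setminus u')$, in agreement with the discrete Morse computation.
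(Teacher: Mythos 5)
Your proof is correct. One point of comparison is moot: the paper does not prove this lemma at all --- it imports it from \cite{ea1}, so there is no internal proof to match. Your first argument (toggle the twin vertex $u$ on the faces containing $u'$, check the matching is perfect there because $\sigma\cap N(u')=\emptyset$ forces $\sigma\cap N(u)=\emptyset$, then collapse) is essentially Engstr\"{o}m's original proof, and your verifications of the side conditions (no loop at $u$, $u\not\sim u'$, acyclicity, critical cells forming the subcomplex $\ind(G\setminus u')$) are all sound. Two small remarks. First, Theorem \ref{acyc3} \emph{as quoted in the paper} only gives a homotopy equivalence to an abstract CW complex with the right cell counts, which by itself does not identify that complex with $\ind(G\setminus u')$; you correctly flag that you need the collapse refinement (critical cells forming a subcomplex implies $\Delta\searrow\Delta_c$), which is indeed part of Theorem 11.13 in \cite{dk} even though the paper's excerpt omits it --- so your citation is legitimate but the reader should be pointed to the full statement. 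Second, your ``cross-check'' is in fact the shortest complete proof available inside this paper's toolkit: it is precisely Lemma \ref{lem:link} applied with $X=\ind(G)$, $v=u'$, $Y=\ind(G\setminus u')$, since $lk_X(u')=\ind(G-N[u'])$ is a cone with apex $u$ ($u$ being isolated there because $N(u)\subseteq N(u')\subseteq N[u']$). That route avoids discrete Morse theory entirely, at the cost of yielding only a homotopy equivalence rather than a collapse. A last peccadillo: the statement as printed does not say $u\neq u'$, and with $u=u'$ it is false (take $G=K_1$); your ``main case'' silently assumes distinctness, which is the intended reading in \cite{ea1} and worth one explicit clause.
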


  \begin{lem}[Proposition $2.10$, \cite{JS10}]\label{lem:addedge} Let $G$ be  a graph and let $\{a,b\} \in \ind(G)$. If $\ind(G - N[\{a,b\}])$ is collapsible, then 
  	$\ind(G)$ collapses onto $\ind(\tilde{G})$, where $V(\tilde{G}) = V(G)$ and $E(\tilde{G}) = E(G) \cup \{(a,b)\}$. In particular, $\ind(G) \simeq \ind(\tilde{G})$.
  	\end{lem}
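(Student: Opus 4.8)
Here is how I would approach proving Lemma~\ref{lem:addedge}.

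\medskip

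\noindent\textbf{Setup and the key structural observation.}
Since $\{a,b\}\in\ind(G)$ we have $(a,b)\notin E(G)$, so adding this one edge only \emph{destroys} independent sets: $\ind(\tilde G)$ is obtained from $\ind(G)$ precisely by deleting those independent sets of $G$ that contain both $a$ and $b$. Call these the \emph{extra} faces; they are exactly the faces of $\ind(G)$ not in the subcomplex $\ind(\tilde G)$. An extra face has the form $\{a,b\}\cup\tau$ with $\tau\cap\{a,b\}=\emptyset$ and $\tau\cup\{a,b\}\in\ind(G)$; using $a\not\sim b$ this says exactly that $\tau$ is an independent set of $G$ avoiding $N[\{a,b\}]$, i.e. $\tau\in L:=\ind(G-N[\{a,b\}])=lk_{\ind(G)}(\{a,b\})$. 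Thus the map $\tau\mapsto\{a,b\}\cup\tau$ is a poset isomorphism from the face poset of $L$ (including the empty face) onto the set of extra faces. I would first record the decisive point: the extra faces form an \emph{up-set} in the face poset of $\ind(G)$, since any face containing an extra face contains $\{a,b\}$ and is therefore itself extra. Equivalently, every coface of an extra face is again an extra face.

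\medskip

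\noindent\textbf{Lifting the collapse of the link.}
Because $L$ is collapsible, I would fix a collapse $L\searrow\{p\}$ to a single vertex, written as a sequence of elementary collapses removing free pairs $(\alpha_1,\beta_1),\dots,(\alpha_k,\beta_k)$. The plan is to lift this verbatim to $\ind(G)$: perform, in the same order, the elementary collapses of the pairs $(\{a,b\}\cup\alpha_i,\{a,b\}\cup\beta_i)$, and finally the pair $(\{a,b\},\{a,b,p\})$. The up-set property is what makes this legal at each stage: the cofaces of $\{a,b\}\cup\alpha_i$ inside $\ind(G)$ are all extra, hence all of the form $\{a,b\}\cup\gamma$ with $\gamma\supsetneq\alpha_i$ a face of the (partially collapsed) $L$; so if $\alpha_i$ is free in $L$ with unique coface $\beta_i$, then $\{a,b\}\cup\alpha_i$ has unique coface $\{a,b\}\cup\beta_i$, of one higher dimension, and the pair is a genuine free pair in the correspondingly partially collapsed $\ind(G)$. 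After lifting the whole sequence, the only extra faces that survive are $\{a,b\}$ and $\{a,b,p\}$; removing the free pair $(\{a,b\},\{a,b,p\})$ clears them and terminates exactly at $\ind(\tilde G)$. This gives $\ind(G)\searrow\ind(\tilde G)$, and in particular $\ind(G)\simeq\ind(\tilde G)$.

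\medskip

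\noindent\textbf{Discrete Morse reformulation and the main obstacle.}
In the language of Section~\ref{sec:productofcomplete} this is the same as the following: collapsibility of $L$ supplies a perfect acyclic matching on the augmented face poset of $L$ (match the surviving vertex $\{p\}$ with $\emptyset$), which I would transport along $\tau\mapsto\{a,b\}\cup\tau$ to a partial matching $M$ on the face poset of $\ind(G)$ that pairs the extra faces among themselves and leaves every face of $\ind(\tilde G)$ critical. Since the extra faces form an up-set, every alternating path of $M$ stays within them, so acyclicity of $M$ is inherited from acyclicity of the matching on $L$; as the critical cells are exactly the faces of the subcomplex $\ind(\tilde G)$ and all other cells are matched, the matching realizes a collapse onto $\ind(\tilde G)$. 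The one point that requires genuine care, rather than bookkeeping, is precisely this interaction with the ambient complex: I must verify that adjoining $\{a,b\}$ creates no cofaces outside the image of $L$ (this is exactly the up-set property, and it is where the hypothesis $\{a,b\}\in\ind(G)$ enters) and that the empty-face/endpoint are handled so that the process stops at $\ind(\tilde G)$ and does not continue to collapse it further.
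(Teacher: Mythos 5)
Your proof is correct, and it cannot diverge from ``the paper's own proof'' because the paper offers none: Lemma~\ref{lem:addedge} is imported verbatim from \cite{JS10} (Proposition~2.10), and your argument is essentially the standard one given there. You correctly identify the faces of $\ind(G)\setminus\ind(\tilde{G})$ with the faces of $lk_{\ind(G)}(\{a,b\})=\ind(G-N[\{a,b\}])$ via $\tau\mapsto\{a,b\}\cup\tau$, observe that these form an up-set (so cofaces of extra faces are extra, which is what makes each lifted pair genuinely free), and lift the collapse of the link, finishing with the pair $(\{a,b\},\{a,b,p\})$; the only slight imprecision is the claim that alternating paths ``stay within'' the extra faces---a downward step can leave the up-set, but it then lands on an unmatched (critical) cell and terminates, so directed cycles do stay inside and acyclicity is indeed inherited, as your argument needs.
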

 
  \begin{lem}[Lemma $2.1$, \cite{kk}]\label{lem:simplicial}
  	Let $G$ be  graph and $v$ be a simplicial vertex\footnote{A vertex $v$ of $G$ is called {\it simplicial} if the subgraph induced by $N(v)$ is a complete graph.} of $G$. Let $N(v) = \{w_1, w_2, \ldots, w_k\}$. Then 
  	$$
  	\ind(G) \simeq \bigvee\limits_{i= 1}^{k} \Sigma(\ind(G - N[	w_i])).
  	$$
  	\end{lem}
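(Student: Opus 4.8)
The plan is to cover $\ind(G)$ by the stars of the vertices of the clique $N[v]$, single out the star of $v$ as a contractible ``core'', and recognize every other piece as a cone whose collapse onto the core produces a suspension. Throughout I write $st(x):=st_{\ind(G)}(\{x\})$ and $lk(x):=lk_{\ind(G)}(x)$ for brevity, and I recall the standard identifications $lk(x)=\ind(G-N[x])$ and $st(x)=\{x\}\ast lk(x)$, so that each star is a cone, hence contractible.

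First I would record the combinatorial structure forced by simpliciality. Since $N[v]=\{v,w_1,\dots,w_k\}$ induces a complete subgraph, every independent set of $G$ contains at most one vertex of $N[v]$, and this yields the covering
$$\ind(G)=st(v)\cup\bigcup_{i=1}^{k}st(w_i).$$
Indeed, a face $\sigma$ with $\sigma\cap N[v]=\emptyset$ avoids every neighbour of $v$, so $\sigma\cup\{v\}\in\ind(G)$ and $\sigma\in st(v)$; a face containing $v$ lies in $st(v)$; and a face containing some $w_i$ lies in $st(w_i)$. These cases are exhaustive.

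The heart of the argument is the intersection bookkeeping, and here simpliciality enters decisively through the inclusion $N[v]\subseteq N[w_i]$ (as $v\sim w_i$ and the $w_j$ form a clique). Using this I would show that each cone meets the core in exactly its own base,
$$st(w_i)\cap st(v)=\ind(G-N[w_i])=lk(w_i),$$
since a common face must avoid $N(v)$ and $N(w_i)$, hence all of $N[w_i]\cup N[v]=N[w_i]$. Equally important, for $i\neq j$ I would verify $st(w_i)\cap st(w_j)\subseteq st(v)$: a common face avoids $N(w_i)$ and $N(w_j)$, and these contain $v$ together with all the remaining $w_l$, so the face avoids $N[v]$ entirely and therefore lies in $st(v)$. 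Thus the cones $st(w_1),\dots,st(w_k)$ are mutually disjoint away from the contractible core, which is precisely what forces a \emph{wedge} rather than a more intricate gluing.

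Finally, because $st(v)$ is a contractible subcomplex, its inclusion is a cofibration and $\ind(G)\simeq\ind(G)/st(v)$. Collapsing the core sends the base $lk(w_i)$ of each cone to the single base point, turning $st(w_i)=\mathrm{Cone}\big(\ind(G-N[w_i])\big)$ into $\mathrm{Cone}(X)/X=\Sigma X=\Sigma\ind(G-N[w_i])$; and since the cones meet only inside the collapsed core, these suspensions are joined only at the base point, giving
$$\ind(G)\simeq\bigvee_{i=1}^{k}\Sigma\,\ind(G-N[w_i]).$$
For $k=1$ this is immediate from Lemma \ref{lem:suspension} applied to the two contractible pieces $st(v)$ and $st(w_1)$, whose intersection is $\ind(G-N[w_1])$; the only real obstacle in the general case is the intersection analysis of the previous paragraph, which guarantees that the collapse decouples the pieces into a wedge.
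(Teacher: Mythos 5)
Your proof is correct. Note first that the paper itself gives no proof of this lemma --- it is quoted verbatim from Kawamura \cite{kk} --- so there is no in-paper argument to diverge from; what you have written is a valid, self-contained proof. All three pillars check out: (i) since $N[v]$ induces a clique, every independent set meets $N[v]$ in at most one vertex, which gives the cover $\ind(G)=st(v)\cup\bigcup_{i}st(w_i)$; (ii) simpliciality yields $N[v]\subseteq N[w_i]$ for every $i$, and this is exactly the point of the hypothesis: it forces $st(w_i)\cap st(v)$ to be the \emph{entire} base $lk(w_i)=\ind(G-N[w_i])$ of the cone $st(w_i)$ rather than a proper subcomplex of it, and it likewise forces $st(w_i)\cap st(w_j)\subseteq st(v)$ for $i\neq j$; (iii) collapsing the contractible subcomplex $st(v)$ (a CW pair, so the inclusion is a cofibration and $\ind(G)\simeq \ind(G)/st(v)$) turns each cone into $\mathrm{Cone}(X)/X\cong \Sigma X$ with $X=\ind(G-N[w_i])$, and since the cones meet pairwise only inside the collapsed core, the images are glued only at the basepoint, giving the wedge. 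Your argument is also consonant with the toolkit the paper does use: the case $k=1$ is precisely Barmak's Lemma \ref{lem:suspension} applied to the contractible pieces $st(v)$ and $st(w_1)$ with intersection $\ind(G-N[w_1])$, and your quotient argument replaces that lemma in the general case in much the same spirit as the paper's gluing Lemmas \ref{lem:contractible} and \ref{lem:wedge}. One pedantic remark: if $N[w_i]=V(G)$ for some $i$, then $\ind(G-N[w_i])$ is the void complex, and the formula requires the standard conventions $X/\emptyset=X_{+}$ and $\Sigma\emptyset=\bS^{0}$; this degenerate case does not affect the structure of your proof.
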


  \begin{defn}
       Let $p: X \to Y$ and $q: X \to Z$ be two continuous maps. The  {\it pushout} of the diagram $Y \xleftarrow{p}  X \xrightarrow{q} Z$ is the space 
       $$\Big(Y \bigsqcup   Z \Big)/ \sim,$$ where $\sim$ denotes  the equivalence relation $p(x) \sim q(x)$ for $x \in X$.

   The {\it homotopy  pushout} of  $Y \xleftarrow{p}  X \xrightarrow{q} Z$ 
  is the space $\big(Y \sqcup ( X \times I ) \sqcup  Z\big)/ \sim$, where 
  $\sim$ denotes  the equivalence relation $(x,0) \sim p(x)$, and $(x,1) \sim q(x)$ for $x \in X$. It can be shown that homotopy pushouts of any two homotopy equivalent diagrams are homotopy equivalent.

  \end{defn}
  
  \begin{rmk}\label{rmk:homotopypushout}
      If spaces are CW complexes and maps are subcomplex  inclusions, then their homotopy pushout and pushout spaces are equivalent up to homotopy. For elaborate discussion of these results, we refer interested readers to \cite[Chapter 7]{bredon}.
  \end{rmk}

  \begin{lem} \label{lem:link} Let $X$ be  a simplicial complex and $v\in V(X)$. Let $Y = \{\sigma \in X \ | \ v \notin \sigma \}$ be  a subcomplex of $X$. If  $lk_X(v)$ is contractible, then $X \simeq Y$. 
\end{lem}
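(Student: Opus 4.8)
The plan is to show that the inclusion $i\colon Y \hookrightarrow X$ is a homotopy equivalence by exhibiting $X$ as being built from $Y$ by attaching the (contractible) star of $v$ along a contractible subcomplex, and then invoking the gluing/pushout machinery already set up in the excerpt. First I would recall the standard decomposition of any simplicial complex containing a vertex $v$ into two subcomplexes whose union is all of $X$: namely the \emph{deletion} $Y = \{\sigma \in X \mid v \notin \sigma\}$ and the \emph{closed star} $st_X(\{v\})$. By definition every simplex of $X$ either contains $v$ (hence lies in the closed star) or does not (hence lies in $Y$), so $X = Y \cup st_X(\{v\})$. Moreover their intersection is exactly the link: $Y \cap st_X(\{v\}) = lk_X(v)$, since a simplex in the closed star not containing $v$ is precisely a face that can be completed by $v$, which is the defining condition for the link.

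Next I would record the two relevant homotopy facts about these pieces. The closed star $st_X(\{v\})$ is a cone with apex $v$ and is therefore always contractible. The hypothesis of the lemma supplies that $lk_X(v) = Y \cap st_X(\{v\})$ is contractible as well. The idea is then to view $X$ as the pushout
\begin{equation*}
Y \xleftarrow{\ j\ } lk_X(v) \xrightarrow{\ k\ } st_X(\{v\}),
\end{equation*}
where $j$ and $k$ are the subcomplex inclusions. Because all three spaces are CW complexes and both maps are subcomplex inclusions (hence cofibrations), Remark \ref{rmk:homotopypushout} tells us that the honest pushout agrees up to homotopy with the homotopy pushout, and the homotopy pushout is invariant under replacing the diagram by a homotopy equivalent one.

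The concluding step is a diagram comparison. Since $lk_X(v)$ is contractible, the inclusion $k\colon lk_X(v) \to st_X(\{v\})$ and the inclusion $lk_X(v) \to \ast$ into a point fit into a diagram that is levelwise homotopy equivalent to
\begin{equation*}
Y \longleftarrow \ast \longrightarrow \ast,
\end{equation*}
whose homotopy pushout is $Y$ itself (collapsing a contractible subspace $lk_X(v) \subseteq Y$, or equivalently gluing a contractible $st_X(\{v\})$ onto $Y$ along a contractible subcomplex, changes nothing up to homotopy). Hence the homotopy pushout of the original diagram, which is $X$, is homotopy equivalent to $Y$, giving $X \simeq Y$ as claimed.

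The main obstacle, and the only point requiring care, is the justification that passing to the homotopy pushout is legitimate and that a contractible gluing region can be collapsed without affecting the homotopy type. This is precisely the content guaranteed by Remark \ref{rmk:homotopypushout} together with the cofibration property of subcomplex inclusions, so once the decomposition $X = Y \cup st_X(\{v\})$ and the identification $Y \cap st_X(\{v\}) = lk_X(v)$ are in place, the argument is essentially formal. An alternative, more elementary route would be to construct an explicit deformation retraction of $X$ onto $Y$: push each simplex $\tau$ containing $v$ off of $v$ into $Y \cup lk_X(v)$ using the cone structure, using contractibility of $lk_X(v)$ to absorb the resulting cells; I would keep this in reserve in case a proof avoiding the pushout language is preferred, but the pushout argument is cleaner given the tools already assembled.
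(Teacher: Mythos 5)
Your proof is correct, and it rests on the same two pillars as the paper's argument---the cone structure $st_X(\{v\}) = v \ast lk_X(v)$ and the homotopy pushout machinery of Remark \ref{rmk:homotopypushout}---but it is organized along a genuinely different route. You decompose $X = Y \cup st_X(\{v\})$ with $Y \cap st_X(\{v\}) = lk_X(v)$ (both identifications are verified correctly under the paper's definition of star) and then run a gluing-lemma comparison, replacing the two contractible corners of the diagram by points. The paper never decomposes $X$ at all: it thickens $Y$ to the mapping cylinder $Z$, the homotopy pushout of $lk_X(v) \xleftarrow{=} lk_X(v) \hookrightarrow Y$, and then collapses the free cylinder end, which is contractible by hypothesis; the quotient is homeomorphic to $X$ precisely because collapsing that end cones off the link with apex $v$. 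The trade-off is this: the paper's version only invokes the elementary fact that collapsing a contractible CW subcomplex along a cofibration preserves homotopy type, whereas your final step comparing with $Y \longleftarrow \ast \longrightarrow \ast$ needs invariance of homotopy pushouts under levelwise equivalences that commute merely up to homotopy, since the constant map $\ast \to Y$ does not strictly agree with the inclusion $lk_X(v) \hookrightarrow Y$. The paper's stated remark that homotopy pushouts of homotopy equivalent diagrams agree covers this, so your argument is sound; if you prefer strictly commuting comparisons, collapse only $st_X(\{v\})$ to a point first, obtaining the mapping cone $Y \cup_{lk_X(v)} C\bigl(lk_X(v)\bigr) \simeq Y / lk_X(v) \simeq Y$, which is in effect the paper's cylinder argument read in reverse. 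One caution on your fallback plan: there is in general no simplicial deformation retraction of $X$ onto $Y$ (contractibility of the link is essential, not just convenient), so making that ``elementary'' route rigorous would require essentially rebuilding the cofibration argument; the pushout route you chose is the right one.
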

\begin{proof}
Let $A= lk_X(v)$ and let $Z$ be the homotopy pushout of the diagram $A \xleftarrow{=}  A \xhookrightarrow{} Y$.  Since $A \times I$ is homotopy equivalent to $A$, $Y \simeq Z$. Also, contractibility of $A$ implies that $Z$ is of the same homotopy type as $Z/ (A \times \{1\})$. Therefore,  
	$	Y \simeq Z \simeq Z / (A \times \{1\})$ which is homeomorphic to $X$.
\end{proof}

  \begin{lem}\label{lem:contractible}
  	Let $n \geq 2$ and $X_1, X_2, \ldots, X_n$ be simplicial complexes. If each $X_i$ is contractible and for each $j \in \{2,3,\ldots, n\}$, $\big{(} \bigcup\limits_{i=1}^{j-1}X_i\big{)} \cap X_j \simeq \bigvee\limits_{r} \bS^{k}$, then $X_1 \cup X_2 \cup  \ldots \cup X_n \simeq \bigvee\limits_{(n-1)r} \bS^{k+1} $.
  \end{lem}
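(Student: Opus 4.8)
The plan is to induct on $n$, building the union one complex at a time and identifying the homotopy type at each stage as a wedge of $(k+1)$-spheres. The base case $n=2$ is immediate: since $X_1$ and $X_2$ are both contractible and cover $X_1\cup X_2$, Lemma \ref{lem:suspension} gives
$X_1\cup X_2 \simeq \Sigma(X_1\cap X_2)\simeq \Sigma\big(\bigvee_r \bS^k\big)=\bigvee_r \bS^{k+1}$, which is $\bigvee_{(2-1)r}\bS^{k+1}$, as desired.

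For the inductive step I would set $Y=X_1\cup\cdots\cup X_{n-1}$. The subfamily $X_1,\dots,X_{n-1}$ inherits all the hypotheses (each $X_i$ is contractible, and the intersection condition holds for every $j\le n-1$), so by the induction hypothesis $Y\simeq \bigvee_{(n-2)r}\bS^{k+1}$. Write $A=Y\cap X_n=\big(\bigcup_{i=1}^{n-1}X_i\big)\cap X_n$, which by assumption (the case $j=n$) satisfies $A\simeq \bigvee_r \bS^k$. Now $X_1\cup\cdots\cup X_n$ is the pushout of the subcomplex inclusions $Y\hookleftarrow A\hookrightarrow X_n$, so by Remark \ref{rmk:homotopypushout} it is homotopy equivalent to the homotopy pushout of this diagram. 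Since $X_n$ is contractible, I would replace it by a point (homotopy pushouts of homotopy equivalent diagrams agree up to homotopy) and conclude that $X_1\cup\cdots\cup X_n$ is homotopy equivalent to the homotopy pushout of $Y\hookleftarrow A\to \ast$, i.e. to the mapping cone of the inclusion $i\colon A\hookrightarrow Y$.

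The crux is then to show that $i$ is null-homotopic, after which the standard identification of the mapping cone of a null-homotopic map yields
$X_1\cup\cdots\cup X_n\simeq Y\vee \Sigma A\simeq \bigvee_{(n-2)r}\bS^{k+1}\vee\bigvee_r \bS^{k+1}=\bigvee_{(n-1)r}\bS^{k+1}$,
completing the induction. To see $i\simeq\ast$, I would use that $Y\simeq\bigvee_{(n-2)r}\bS^{k+1}$ is $k$-connected, so $\pi_k(Y)=0$; since $A\simeq\bigvee_r \bS^k$ and based homotopy classes out of a wedge split as a product, $[A,Y]_\ast\cong\prod_r\pi_k(Y)=0$, which forces the based inclusion $i$ to be null-homotopic and gives the mapping-cone splitting $C_i\simeq Y\vee\Sigma A$.

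The main obstacle is precisely this null-homotopy step, together with the care needed in the degenerate case $k=0$: there $Y$ is only path-connected (not simply connected) and $A$ is a wedge of $0$-spheres, so the vanishing $[A,Y]_\ast=0$ must be argued through $\pi_0$ and path-connectedness rather than higher homotopy groups, though the conclusion $i\simeq\ast$ is unchanged. I would also verify that the basepoint of $A$ maps to the basepoint of $Y$ so that the splitting $C_i\simeq Y\vee\Sigma A$ is valid in the based category, which is automatic for well-pointed simplicial complexes.
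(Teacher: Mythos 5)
Your proof is correct and follows essentially the same route as the paper's: induction on $n$, realising $X_1\cup\cdots\cup X_n$ as the pushout of $\bigcup_{i=1}^{n-1}X_i \hookleftarrow \big(\bigcup_{i=1}^{n-1}X_i\big)\cap X_n \hookrightarrow X_n$, passing to the homotopy pushout via Remark \ref{rmk:homotopypushout}, and collapsing the contractible piece $X_n$ to a point. The only differences are immaterial or to your credit: your base case invokes Lemma \ref{lem:suspension} where the paper repeats the pushout argument, and your mapping-cone analysis (null-homotopy of the inclusion via $k$-connectedness of a wedge of $(k+1)$-spheres, with the $k=0$ case handled through path-connectedness) supplies exactly the justification the paper leaves implicit when it asserts that the homotopy pushout of $\bigvee_{(n-2)r}\bS^{k+1}\longleftarrow \bigvee_{r}\bS^{k}\longrightarrow \{\text{point}\}$ is $\bigvee_{(n-1)r}\bS^{k+1}$.
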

  
  \begin{proof}Observe that $X_1 \cup X_2$ is the pushout of the diagram $X_1\xhookleftarrow{} X_1 \cap X_2 \xhookrightarrow{} X_2$, where $\hookrightarrow$ denotes inclusion maps. From Remark \ref{rmk:homotopypushout}, the homotopy pushout and pushout of $X_1\xhookleftarrow{} X_1 \cap X_2 \xhookrightarrow{} X_2$ are homotopy equivalent to each other. Further, the homotopy pushout of $X_1\xhookleftarrow{} X_1 \cap X_2 \xhookrightarrow{} X_2$ is homotopy equivalent to the homotopy pushout of $\{ \text{point}\}\longleftarrow{} \bigvee\limits_{r} \bS^{k} \longrightarrow{} \{\text{point}\}$ (since $X_1$ and $X_2$ are contractible and $X_1 \cap X_2 \simeq \bigvee\limits_{r} \bS^{k}$). Moreover, homotopy pushout of $\{\text{point}\}\longleftarrow{} \bigvee\limits_{r} \bS^{k} \longrightarrow{} \{\text{point}\}$ is homotopy equivalent to $\Sigma\big{(}\bigvee\limits_{r} \bS^{k}\big{)}$. Therefore, $X_1 \cup X_2 \simeq \bigvee\limits_{r} \bS^{k+1}$.
  
 Let $n \geq 3$. Inductively assume that for any $2 \leq t < n$, $\bigcup\limits_{i=1}^{t}X_i \simeq \bigvee\limits_{(t-1)r} \bS^{k+1}$. 
  In particular, $\bigcup\limits_{i=1}^{n-1}X_i \simeq \bigvee\limits_{(n-2)r} \bS^{k+1}$.
  Further, the pushout of the diagram $\bigcup\limits_{i=1}^{n-1}X_i \xhookleftarrow{} \big{(}\bigcup\limits_{i=1}^{n-1}X_i\big{)} \cap X_n \xhookrightarrow{} X_n$ is the space $\bigcup\limits_{i=1}^{n}X_i$. Thus, from Remark \ref{rmk:homotopypushout}, $\bigcup\limits_{i=1}^{n}X_i$ is homotopy equivalent to the homotopy pushout of the diagram $\bigvee\limits_{(n-2)r} \bS^{k+1} \longleftarrow{} \bigvee\limits_{r} \bS^{k}  \longrightarrow{} \{\text{point}\}$ which is homotopy equivalent to $\bigvee\limits_{(n-2)r+r} \bS^{k+1}$.

  \end{proof}

\begin{lem} \label{lem:wedge}
  	Let $n \geq 2$ and $X_1, X_2, \ldots, X_n$ be simplicial complexes. If for each $i \in \{1,2,\ldots,n\}$, $X_i\simeq \bigvee\limits_{r}\bS^k$ and for each $j \in \{2,3,\ldots, n\}$, $\big{(} \bigcup\limits_{i=1}^{j-1}X_i\big{)} \cap X_j$ is contractible, then $X_1 \cup X_2 \cup  \ldots \cup X_n\simeq \bigvee\limits_{nr}\bS^k$.
  	\end{lem}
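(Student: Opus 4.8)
The plan is to mirror the inductive pushout argument of Lemma \ref{lem:contractible} almost verbatim, simply interchanging the roles played by ``contractible'' and ``wedge of $k$-spheres.'' The single homotopy-theoretic fact I will lean on is that gluing two spaces along a contractible subcomplex does not change the homotopy type of either piece in a destructive way: the homotopy pushout of a diagram $A \longleftarrow \{\text{point}\} \longrightarrow B$ is homotopy equivalent to the wedge $A \vee B$. Since all the maps in sight are subcomplex inclusions between CW complexes, Remark \ref{rmk:homotopypushout} lets me freely replace ordinary pushouts by homotopy pushouts, and the latter are invariant under homotopy equivalence of diagrams.

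First I would treat the base case $n=2$. Here $X_1 \cup X_2$ is the pushout of the diagram $X_1 \xhookleftarrow{} X_1 \cap X_2 \xhookrightarrow{} X_2$. By Remark \ref{rmk:homotopypushout} this is homotopy equivalent to the corresponding homotopy pushout. Because $X_1 \cap X_2$ is contractible and $X_1 \simeq X_2 \simeq \bigvee_r \bS^k$, this homotopy pushout is homotopy equivalent to the homotopy pushout of $\bigvee_r \bS^k \longleftarrow \{\text{point}\} \longrightarrow \bigvee_r \bS^k$, which is homotopy equivalent to $\bigvee_r \bS^k \vee \bigvee_r \bS^k = \bigvee_{2r}\bS^k$. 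This settles $n=2$.

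For the inductive step, I would assume $\bigcup_{i=1}^{n-1} X_i \simeq \bigvee_{(n-1)r}\bS^k$ and observe that $\bigcup_{i=1}^{n} X_i$ is the pushout of
\[
\bigcup_{i=1}^{n-1} X_i \xhookleftarrow{} \Big(\bigcup_{i=1}^{n-1} X_i\Big) \cap X_n \xhookrightarrow{} X_n .
\]
The hypothesis of the lemma guarantees that the apex $\big(\bigcup_{i=1}^{n-1} X_i\big)\cap X_n$ is contractible, so again invoking Remark \ref{rmk:homotopypushout} and the invariance of homotopy pushouts under homotopy equivalence of diagrams, $\bigcup_{i=1}^{n} X_i$ is homotopy equivalent to the homotopy pushout of $\bigvee_{(n-1)r}\bS^k \longleftarrow \{\text{point}\} \longrightarrow \bigvee_{r}\bS^k$. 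This in turn is homotopy equivalent to $\bigvee_{(n-1)r}\bS^k \vee \bigvee_{r}\bS^k = \bigvee_{nr}\bS^k$, completing the induction.

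I do not expect a genuine obstacle here; the argument is structurally identical to Lemma \ref{lem:contractible} and the only conceptual input is the ``wedge along a point'' computation for homotopy pushouts. The one point worth stating carefully is \emph{why} collapsing a contractible apex yields a wedge rather than a suspension: in Lemma \ref{lem:contractible} the apex was a wedge of spheres and the legs were contractible, so collapsing the two legs produced a suspension of the apex; here the apex is contractible and the legs carry the homotopy, so collapsing the apex leaves the two legs joined only at a point. Making this contrast explicit, and confirming that the subcomplex inclusions are cofibrations so Remark \ref{rmk:homotopypushout} applies at each stage, is all that the write-up requires.
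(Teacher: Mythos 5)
Your proposal is correct and takes essentially the same approach as the paper: the paper's proof likewise uses Remark \ref{rmk:homotopypushout} to replace the pushout $X_1 \xhookleftarrow{} X_1 \cap X_2 \xhookrightarrow{} X_2$ by the homotopy pushout of $\bigvee_{r}\bS^k \longleftarrow \{\text{point}\} \longrightarrow \bigvee_{r}\bS^k$, identifies the latter with $\bigvee_{2r}\bS^k$, and then concludes by the same induction on $n$. Your added remarks on cofibrancy and on the contrast with Lemma \ref{lem:contractible} (contractible apex yielding a wedge versus contractible legs yielding a suspension) are accurate and only make explicit what the paper leaves implicit.
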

  \begin{proof}
  
Using similar arguments as in the proof of Lemma \ref{lem:contractible}, we get that $X_1 \cup X_2$ is homotopy equivalent to the homotopy pushout of $\bigvee\limits_{r}\bS^k \xhookleftarrow{} \{\text{point}\}\xhookrightarrow{} \bigvee\limits_{r}\bS^k$ (since $X_1\simeq \bigvee\limits_{r}\bS^k \simeq X_2 $ and $X_1 \cap X_2$ is contractible).

Further, homotopy pushout of $\bigvee\limits_{r}\bS^k \xhookleftarrow{} \{\text{point}\}\xhookrightarrow{} \bigvee\limits_{r}\bS^k$ is homotopy equivalent to $\bigvee\limits_{r+r} \bS^{k}$. Thus, $X_1 \cup X_2 \simeq \bigvee\limits_{2r} \bS^{k}$. As before, the result now follows from induction.
 \end{proof}

\begin{prop}\label{prop:main} Let $r \geq 0$ and $n \geq 2$. Then
			\begin{center}
		$ \ind(K_n\times L_{r}) \simeq 
		\begin{cases}
		\bigvee\limits_{(n-1)^{k+1}} \bS^{2k} & \text{if } r=3k, \\
		\hspace*{0.3cm} \{\rm{point}\} & \text{if} \ r = 3k+1,\\
		
		\bigvee\limits_{(n-1)^{k+1}} \bS^{2k+1} & \text{if } r= 3k+2. \\
		\end{cases}$
	\end{center}

	\end{prop}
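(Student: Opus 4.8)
The plan is to set up a folding reduction that decreases $r$ by $3$, matching the period-$3$ shape of the formula. First I would record the structure of $G_r := K_n \times L_r$. Writing $V_j = \{(a_1,j), \dots, (a_n,j)\}$ for the level-$j$ vertices, note that $G_r$ has no loops (since $K_n$ has none, the product has no self-adjacencies), the level $V_0$ induces a copy of $K_n$ (two level-$0$ vertices are adjacent precisely because of the loop of $L_r$ at $0$), each $V_j$ with $j \geq 1$ is an independent set, and consecutive levels are joined by the edges $(a,j)\sim(a',j\pm1)$ whenever $a\neq a'$.

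The key observation is a neighbourhood containment enabling folding. For $r\geq 2$ and each $i$ one has
$$N\big((a_i,r)\big)=\{(a_\ell,r-1):\ell\neq i\}\subseteq N\big((a_i,r-2)\big),$$
so Lemma \ref{lem:folding} lets me delete $(a_i,r-2)$ without changing the homotopy type. Iterating, I delete all of $V_{r-2}$: deleting a level-$(r-2)$ vertex does not shrink the neighbourhood of any other level-$(r-2)$ vertex, since those neighbourhoods sit in levels $r-3$ and $r-1$, so the required containment persists at each stage. Removing $V_{r-2}$ disconnects the ladder: the top two levels $V_{r-1}\cup V_r$ form a component isomorphic to $K_n\times K_2$, while $V_0\cup\dots\cup V_{r-3}$ form a component isomorphic to $K_n\times L_{r-3}$. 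Hence by Lemma \ref{lem:union},
$$\ind(G_r)\simeq \ind(K_n\times L_{r-3})\ast\ind(K_n\times K_2)\qquad(r\geq 3),$$
and Proposition \ref{thm:indkmkn} gives $\ind(K_n\times K_2)\simeq\bigvee_{n-1}\bS^1$.

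Next I would run an induction on $r$ with the three residues modulo $3$ as base cases. For $r=0$, $G_0=K_n$ and $\ind(K_n)=\bigvee_{n-1}\bS^0$. For $r=1$, the same folding ($N((a_i,1))\subseteq N((a_i,0))$) deletes $V_0$, leaving the edgeless graph on $V_1$, whose independence complex is a full simplex and hence contractible. For $r=2$, deleting $V_0$ leaves exactly $K_n\times K_2$, giving $\bigvee_{n-1}\bS^1$. For the inductive step I combine the displayed equivalence with the standard fact that, for well-pointed spaces, $(\bigvee_c\bS^d)\ast(\bigvee_{n-1}\bS^1)\simeq\bigvee_{c(n-1)}\bS^{d+2}$ (using $A\ast B\simeq\Sigma(A\wedge B)$ and distributivity of smash over wedge), together with the fact that joining with a contractible space gives a contractible space. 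Each step thus multiplies the number of spheres by $(n-1)$ and raises their dimension by $2$, which propagates the three base cases into exactly the claimed wedges for $r=3k$, $r=3k+1$ (where the point stays a point), and $r=3k+2$.

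The neighbourhood containments and the join bookkeeping are routine. The step requiring the most care is the reduction itself: verifying that the iterated folding of the entire level $V_{r-2}$ is legitimate and, crucially, that removing it splits $G_r$ into precisely the two advertised components $K_n\times L_{r-3}$ and $K_n\times K_2$. Once this disjoint-union decomposition is secured, the full period-$3$ pattern follows formally from the join computation.
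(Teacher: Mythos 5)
Your proposal is correct and follows essentially the same route as the paper: the same fold $N((a_i,r))\subseteq N((a_i,r-2))$ deleting the whole level $r-2$ (Lemma \ref{lem:folding}), the same resulting decomposition into $(K_2\times K_n)\sqcup(K_n\times L_{r-3})$ handled via Lemma \ref{lem:union} and Proposition \ref{thm:indkmkn}, and the same induction with base cases $r=0,1,2$. Your explicit verification that the iterated folding persists and the smash/wedge bookkeeping for the join are points the paper leaves implicit, but the argument is the same.
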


\begin{proof}Let $r=3k+t$ for some $t \in \{0,1,2\}$ and $k\geq 0$. We  prove this result by induction on $k$.

To prove the base step, let $k=0$. We show that the result holds for $t \in \{0,1,2\}$. If $t=0$, then  $K_n \times L_0$ isomorphic to $K_n$ implies $$\ind(K_n\times L_0) \cong \ind({K_n}) = \bigvee\limits_{n-1} \bS^0.$$

For $t=1$, let $H_1$ be the induced subgraph of $K_n\times L_1$ with vertex set $\{(i,1)\ | \ 1\leq i \leq n\}$. Since $H_1$ does not have any edge, $\ind(H_1)\simeq \{\text{point}\}$. Observe that, in $K_n\times L_1$, $N((i,1))\subseteq N((i, 0))$ for each $i \in \{1,2, \ldots, n\}$. Now repeated use of Lemma \ref{lem:folding} for each $i$ gives us
$\ind(K_n\times L_1)  \simeq \ind(H_1) \simeq \{\text{point}\}.$
This proves the result for $k=0$ and $t = 1$.

 Finally, if $t=2$, let $H_2$ be the induced subgraph of $K_n \times L_2$ with vertex set $\{(i,j)\ | \ 1\leq i \leq n, \ 1\leq j \leq 2)\}$. Clearly, $H_2 \cong K_2 \times K_n$. We observe that in $K_n \times L_2$, $N((i,2))\subseteq N((i,0))$ for each $i \in \{1,2, \ldots, n\}$. By Lemma \ref{lem:folding} and Proposition \ref{thm:indkmkn}, we conclude that

$$\ind(K_n\times L_2) \simeq \ind(K_2 \times K_n)\simeq \bigvee\limits_{n-1} \bS^1.$$

Inductively assume that the result is true for $k < s$ and $t \in \{0,1,2\}$. We now  show that the result holds for $k=s >0$ and every $t \in \{0,1,2\}$.

Let $H$ be the induced subgraph of $K_n \times L_{3s+t}$ with vertex set $V(K_n \times L_{3s+t}) \setminus \{(i,3s+t-2)\ | \ 1\leq i \leq n\}$. Further, in $K_n\times L_{3s+t}$, $N((i,3s+t))\subseteq N((i,3s+t-2))$. Thus, using Lemma \ref{lem:folding}, we have that $\ind(K_n\times L_{3s+t})\simeq \ind(H)$. Now observe that, $H \cong (K_2\times K_n)\bigsqcup (K_n \times L_{3s+t-3})$. Using Lemma \ref{lem:union} and Proposition \ref{thm:indkmkn}, we get 

\begin{align*}
\ind(K_n\times L_{3s+t}) & \simeq \ind(H)\\
& \simeq \ind(K_2\times K_n) \ast \ind(K_n \times L_{3s+t-3})\\
& \simeq \big{(} \bigvee\limits_{n-1} \bS^1\big{)}\ast \big{(}\ind(K_n \times L_{3(s-1)+t})\big{)}.
\end{align*}

By  induction hypothesis, we get
\begin{align*}
 \ind(K_n\times L_{3s+t}) & \simeq \begin{cases}		
        \big{(} \bigvee\limits_{n-1} \bS^1\big{)}\ast \big{(}\bigvee\limits_{(n-1)^{s}} \bS^{2(s-1)}\big{)} & \text{if } t=0, \\
		\big{(} \bigvee\limits_{n-1} \bS^1\big{)}\ast \{\text{point}\} & \text{if} \ t= 1,\\
		\big{(} \bigvee\limits_{n-1} \bS^1\big{)}\ast\big{(}\bigvee\limits_{(n-1)^{s}} \bS^{2(s-1)+1}\big{)} & \text{if } t=2. \\ 
 \end{cases}\\
 & \simeq \begin{cases}		
        \bigvee\limits_{(n-1)^{s+1}} \bS^{2s} & \text{if } t=0, \\
	   \hspace*{0.3cm} \{\text{point}\} & \text{if} \ t= 1,\\
		\bigvee\limits_{(n-1)^{s+1}} \bS^{2s+1} & \text{if } t=2. \\ 
 \end{cases}
\end{align*}
This completes the proof of Proposition \ref{prop:main}.
	\end{proof}

 We fix a natural number $n \geq 3$, and define a few notations that would be used in rest of this section. 
 
\begin{enumerate}
    \item Let $V(K_n) = \{1,2,\dots, n\}$.
    \item In $M_r(K_n)$, the equivalence class of vertices with second coordinate as $r$  is denoted by $w_r$.
    \item For $r \geq 3$, let $i \in \{1, \ldots, n\}$ and $j \in \{1, \ldots, r-1\}$. We define $I_{i, j}^n$ to be the subgraph of $M_r(K_n)$ induced by  $V(M_{j}(K_n)) \setminus \{w_{j}\} \cup \{(i,j)\}$. Also, let $I_{i,0}^n$ be the subgraph of $M_r(K_n)$ induced by the vertex $(i,0)$.  
      \end{enumerate}

\begin{figure} [H]
	\centering
	\resizebox{0.3\textwidth}{!}{
		\begin{tikzpicture}
		[scale=0.7, vertices/.style={minimum size=0.7pt, draw, fill=black, circle, inner
			sep=0.7pt}]
		\node[vertices, label=left:{$(1,0)$}] (a) at (1,2.5) {};
		\node[vertices, label=right:{$(2,0)$}] (b) at (5,2.5) {};
		\node[vertices,label=below:{$(3,0)$}] (c) at (3,1) {};
			\node[vertices, label=left:{$(1,1)$}] (d) at (1,5) {};
		\node[vertices, label=right:{$(2,1)$}] (e) at (5,5) {};
		\node[vertices,label=above:{$(3,1)$}] (f) at (3,5) {};
		\node[vertices,label=above:{$(3,2)$}] (g) at (3,7) {};
		\foreach \to/\from in
		{a/b, b/c, c/a, d/b, d/c, e/a, e/c, f/a, f/b, g/d, g/e} \draw [-] (\to)--(\from);
		\end{tikzpicture}}
		\caption{$I_{3,2}^3$} \label{fig:I32}
	\end{figure}	
 
{\bf Example:} $I_{3,2}^3$ is the induced subgraph of $M_r(K_3)$ on the vertex set $\{(s,t) \ : \ 1\leq s \leq 3, \ 0 \leq t \leq 1\} \cup \{(3,2)\}$ (see Figure \ref{fig:I32}).

Since $n\geq 3$ is fixed for the rest of this section, we would write $I_{i,j}$ to denote $I_{i,j}^n$ for the simplicity of notation.

\begin{lem}  \label{lem:base}Let $I_{1,t}$ be as defined above, then 

\begin{align*}
 \ind(I_{1,t}) & \simeq \begin{cases}		
       \bigvee\limits_{n-1} \bS^{0} & \text{if} \ t=1, \\
		 \bigvee\limits_{n-1} \bS^{1} & \text{if} \ t= 2,\\
		 \{\rm{point}\} & \text{if}\  t=3. \\ 
 \end{cases}
 \end{align*}
	\end{lem}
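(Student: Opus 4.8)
The plan is to compute $\ind(I_{1,t})$ by treating the three values $t\in\{1,2,3\}$ separately and, in each case, locating a simplicial vertex to which Lemma~\ref{lem:simplicial} applies. It is worth first recording the structure of $I_{1,t}$: it consists of the ``levels'' $0,1,\dots,t-1$ (each a copy of $V(K_n)$) together with the single top vertex $(1,t)$, where level $0$ spans a clique, the intermediate levels are independent sets, consecutive levels $j,j+1$ are joined by all edges $(s,j)\sim(s',j+1)$ with $s\neq s'$, and $(1,t)$ is adjacent to exactly $\{(s,t-1)\ |\ s\neq 1\}$. The key observation is that the neighbourhood of $(1,1)$ is contained in $\{(2,0),\dots,(n,0)\}$, a face of the level-$0$ clique, so $(1,1)$ is a \emph{simplicial} vertex precisely when it has no neighbour above level $0$, that is, exactly when $t\in\{1,2\}$.

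For $t=1$ and $t=2$ I would apply Lemma~\ref{lem:simplicial} to $v=(1,1)$, whose $n-1$ neighbours are $w_s=(s,0)$, $2\le s\le n$. A direct computation of $N[w_s]$ shows that $I_{1,1}-N[w_s]$ is the empty graph, so each summand contributes $\Sigma(\ind(\emptyset))=\Sigma(\bS^{-1})=\bS^0$, giving $\ind(I_{1,1})\simeq\bigvee\limits_{n-1}\bS^0$. For $t=2$ the same deletion leaves exactly the two adjacent vertices $(s,1)$ and $(1,2)$, so $I_{1,2}-N[w_s]\cong K_2$ and each summand becomes $\Sigma(\ind(K_2))=\Sigma(\bS^0)=\bS^1$, whence $\ind(I_{1,2})\simeq\bigvee\limits_{n-1}\bS^1$.

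For $t=3$ the top vertex $(1,3)$ is no longer simplicial, since its neighbourhood $\{(2,2),\dots,(n,2)\}$ sits inside the independent level $2$; so my plan is first to \emph{make} it simplicial. Using Lemma~\ref{lem:addedge} I would add, one edge at a time, every $(s,2)\sim(s',2)$ with $2\le s<s'\le n$, turning $\{(2,2),\dots,(n,2)\}$ into a clique. At each step the pair $\{(s,2),(s',2)\}$ is independent, and after deleting $N[\{(s,2),(s',2)\}]$ the vertex $(1,2)$ survives as an \emph{isolated} vertex (its neighbours all lie in level $1$, which is entirely deleted), so the relevant independence complex is a cone and in particular collapsible; hence Lemma~\ref{lem:addedge} applies throughout and preserves the homotopy type. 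In the resulting graph $\hat G$ the vertex $(1,3)$ is simplicial, and Lemma~\ref{lem:simplicial} gives $\ind(I_{1,3})\simeq\bigvee_{s=2}^{n}\Sigma(\ind(\hat G-N[(s,2)]))$. Each $R_s:=\hat G-N[(s,2)]$ retains all of level $0$ together with the single vertices $(s,1)$ and $(1,2)$, with $(1,2)$ now a pendant vertex attached only to $(s,1)$; a second application of Lemma~\ref{lem:simplicial} to the simplicial vertex $(1,2)$ collapses $R_s$ onto the single vertex $(s,0)$, so $\ind(R_s)$ is contractible. Each summand is thus a suspension of a contractible space, and $\ind(I_{1,3})$ is contractible.

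The cases $t=1,2$ are routine; the genuine work, and the main obstacle, is the contractibility statement for $t=3$, where the natural candidate $(1,3)$ fails to be simplicial. The crux is the preparatory enlargement of the graph by edge additions so that Lemma~\ref{lem:simplicial} becomes available, together with the verification that each addition satisfies the collapsibility hypothesis of Lemma~\ref{lem:addedge} --- which is exactly where the persistent isolated vertex $(1,2)$ is exploited. An alternative I would keep in reserve is to apply Lemma~\ref{lem:folding} first, deleting the ``column-$1$'' vertices $(1,0)$ and $(1,1)$, and then analyse the reduced graph directly; but the edge-addition route appears to give the cleanest reduction to repeated use of the simplicial-vertex lemma.
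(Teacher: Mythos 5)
Your proof is correct and follows essentially the same route as the paper: the $t=2$ case is verbatim the paper's argument, and for $t=3$ you use the identical combination of Lemma \ref{lem:addedge} (with a persistent isolated vertex certifying collapsibility at each edge addition) followed by Lemma \ref{lem:simplicial}, merely completing level $2$ around $(1,3)$ to a clique where the paper completes level $1$ and suspends from the simplicial vertex $(1,2)$ --- an equivalent variant, though the paper's level choice is the one that later scales to Lemma \ref{lem:I_{i,r-2}}. The only point needing care is $t=1$, where you apply Lemma \ref{lem:simplicial} with $I_{1,1}-N[(s,0)]$ the empty graph, which silently invokes the degenerate convention $\ind(\emptyset)=\{\emptyset\}\simeq\bS^{-1}$ and $\Sigma(\bS^{-1})=\bS^{0}$; this is consistent (check it on $K_n$ itself), but the paper avoids the degenerate case entirely by folding, via Lemma \ref{lem:folding} and $N((1,1))=N((1,0))$, to $\ind(K_n)\simeq\bigvee\limits_{n-1}\bS^{0}$.
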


\begin{proof}
	In $I_{1,1}$, $N((1,0)) = \{(2,0), \ldots, (n,0)\} = N((1,1))$ and therefore  by Lemma \ref{lem:folding}, $\ind(I_{1,1}) \simeq \ind(I_{1,1} \setminus \{(1,1)\}) \cong \ind(K_n) \simeq \bigvee\limits_{n-1} \bS^{0}$.
	
	Recall that a vertex $v$ is simplicial if the subgraph induced by $N(v)$ is a complete graph.	In $I_{1,2}$, $N((1,1)) = \{(2,0), \ldots, (n,0)\}$ and therefore $(1,1)$ is a simplicial vertex. Moreover, for each $2 \leq i \leq n$, $V(I_{1,2}) \setminus N[(i,0)] = \{(1,2), (i,1)\}$, implying that $I_{1,2} - N[(i,0)] \cong K_2$. Therefore, using Lemma \ref{lem:simplicial}, we get that
	$$
	\ind(I_{1,2}) \simeq \bigvee\limits_{n-1} \Sigma(\ind(K_2)) \simeq \bigvee\limits_{n-1} \Sigma(\bS^{0}) \simeq \bigvee\limits_{n-1} \bS^{1}.
	$$
	
	Since the graph $I_{1,3} - N[\{(1,1), (2,1)\}]$ contains an isolated vertex $(1,3)$, $\ind(I_{1,3} - N[\{(1,1), (2,1)\}])$ is a cone by Lemma \ref{lem:union}, and hence collapsible. 
	Using Lemma \ref{lem:addedge}, $\ind(I_{1,3}) \simeq \ind(I_{1,3}')$, where $I_{1,3}'$ is the graph with $V(I_{1,3}') = V(I_{1,3})$ and $E(I_{1,3}') = E(I_{1,3}) \cup \{((1,1), (2,1))\}$.
	We repeat this process for all pair of vertices
	$((i,1), (j,1))$ $ 1\leq i \neq  j \leq n $ and apply Lemma \ref{lem:addedge}, which thereby implies that
	$\ind(I_{1,3}) \simeq \ind(\tilde{I}_{1,3})$, where $V(\tilde{I}_{1,3}) = V(I_{1,3})$ and $E(\tilde{I}_{1,3}) = E(I_{1,3}) \cup \{((i,1),(j,1)) \ : \ 1 \leq i \neq j \leq n\}$. For each $1 \leq i \leq n$, $\tilde{I}_{1,3} - N[\{(i,1)\}]$ contains an isolated vertex $(i,0)$ and therefore $\ind(\tilde{I}_{1,3} - N[\{(i,1)\}])$  is collapsible by Lemma \ref{lem:union}. Now, using the fact that $(1,2)$ is a simplicial vertex in $\tilde{I}_{1,3}$ and $\ind(\tilde{I}_{1,3} - N[\{(i,1)\}]) \cong \ind(\tilde{I}_{1,3} - N[\{(j,1)\}])$ for all $2 \leq i \neq j \leq n$, by Lemma \ref{lem:simplicial}, $\ind(\tilde{I}_{1,3})$ is contractible. Hence, $\ind(I_{1,3})$ is contractible.
	\end{proof}

We now generalise Lemma \ref{lem:base} and compute the homotopy type of independence complex of $I_{1,t}$, for any natural number $t$.

\begin{lem} \label{lem:I_{i,r-2}}Let $t \geq 6$.  Then

\begin{align*}
 \emph{\ind}(I_{1,t-2}) & \simeq \begin{cases}		
       \bigvee\limits_{(n-1)^{k}} \bS^{2(k-1)+1} & \text{if} \ t=3k+1, \\
		  \{\rm{point}\} &  \text{if} \ t=3k+ 2,\\
		\bigvee\limits_{(n-1)^{k}} \bS^{2(k-1)}& \text{if}\  t=3k. \\ 
 \end{cases}
 \end{align*}
\end{lem}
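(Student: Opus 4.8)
The plan is to prove Lemma \ref{lem:I_{i,r-2}} by induction on $k$, exactly mirroring the structure of the recursion that appears in Proposition \ref{prop:main}. The base cases $t = 4, 5, 6$ (i.e.\ $k = 1$, handling all three residues $t = 3k+1, 3k+2, 3k$) should be settled first; these amount to computing $\ind(I_{1,2})$, $\ind(I_{1,3})$, and $\ind(I_{1,4})$. The first two are already done in Lemma \ref{lem:base} (giving $\bigvee_{n-1}\bS^1$ and a point respectively), so the only genuinely new base computation is $\ind(I_{1,4})$, which I expect to reduce — via folding (Lemma \ref{lem:folding}), since $N((i,4)) \subseteq N((i,2))$ — to an independence complex that splits off a copy of the $K_2 \times K_n$ piece joined with a lower $I_{1,\ast}$ or $K_n \times L_\ast$ complex, yielding $\bigvee_{(n-1)^2}\bS^2$.

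For the inductive step, the key structural observation I would establish is that $I_{1,t-2}$ decomposes after a folding move: in $M_r(K_n)$ one has the neighbourhood containment $N((i, j)) \subseteq N((i, j-2))$ for the top layers, so Lemma \ref{lem:folding} lets me delete a layer of vertices without changing the homotopy type. After deleting the appropriate layer, the remaining induced subgraph should become disconnected as a disjoint union of a copy of $K_2 \times K_n$ (the top two surviving layers) and a copy of $I_{1,t-5}$ (equivalently $I_{1,(t-3)-2}$, the analogous gadget three levels shorter). Then Lemma \ref{lem:union} converts the independence complex into a join:
\begin{align*}
\ind(I_{1,t-2}) &\simeq \ind(K_2 \times K_n) \ast \ind(I_{1,(t-3)-2})\\
&\simeq \Big(\bigvee_{n-1}\bS^1\Big) \ast \ind(I_{1,(t-3)-2}),
\end{align*}
using Proposition \ref{thm:indkmkn} for the first factor. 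Applying the induction hypothesis to the shorter gadget $I_{1,(t-3)-2}$ and invoking the standard fact that $\big(\bigvee_a \bS^p\big)\ast\big(\bigvee_b \bS^q\big) \simeq \bigvee_{ab}\bS^{p+q+1}$ (with the degenerate join-with-a-point being contractible) then produces the claimed wedge in the next residue class: a join with $\bigvee_{n-1}\bS^1$ raises the sphere dimension by $2$ and multiplies the wedge count by $n-1$, which is precisely what the passage from $k-1$ to $k$ demands in each of the three cases.

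The main obstacle I anticipate is verifying the disjoint-union decomposition after folding with full rigor: I must check carefully which layer of vertices to delete, confirm that the neighbourhood containment $N((i,j))\subseteq N((i,j-2))$ genuinely holds in $M_r(K_n)$ (the identification of all top-coordinate-$r$ vertices into the single class $w_r$, and the definition of $I_{1,t-2}$ as an \emph{induced} subgraph on $V(M_{t-2}(K_n))\setminus\{w_{t-2}\}\cup\{(1,t-2)\}$, make the top-layer bookkeeping delicate), and then confirm that the truncated graph really splits as $(K_2\times K_n)\sqcup I_{1,t-5}$ with no stray edges between the two pieces. A secondary point requiring care is keeping the index shift $t \mapsto t-3$, $k \mapsto k-1$ consistent across all three residue cases simultaneously, so that the exponents $2(k-1)$, $2(k-1)+1$ and the wedge multiplicities $(n-1)^k$ line up with the statement; once the decomposition is pinned down, however, the rest is a routine join calculation parallel to Proposition \ref{prop:main}.
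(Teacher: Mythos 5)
There is a genuine gap, and it sits exactly where you anticipated trouble: the disjoint-union decomposition after folding is false. The fold in Proposition \ref{prop:main} deletes the \emph{entire} layer $r-2$ of $K_n\times L_r$ because every vertex $(i,r)$ of the full top layer supplies a containment $N((i,r))\subseteq N((i,r-2))$. In $I_{1,t-2}$, however, the top layer is the single vertex $(1,t-2)$, so the only containment of this kind is $N((1,t-2))\subseteq N((1,t-4))$, and Lemma \ref{lem:folding} therefore lets you delete only the one vertex $(1,t-4)$. The $n-1$ vertices $(i,t-4)$ with $i\neq 1$ survive, and each is adjacent both to layer $t-5$ and to layer $t-3$, so the truncated graph remains connected: it does not split as $(K_2\times K_n)\sqcup I_{1,t-5}$, and Lemma \ref{lem:union} never becomes applicable. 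Nor can further folds rescue the layer deletion: for $i\neq 1$, no vertex $u$ satisfies $N(u)\subseteq N((i,t-4))$, since $N((i,t-4))$ lies entirely in layers $t-3$ and $t-5$, while every $(j,t-3)$ with $j\neq 1$ has the top vertex $(1,t-2)$ as a neighbour and $N((1,t-3))$ meets layer $t-4$. The same problem already defeats your proposed base computation of $\ind(I_{1,4})$; note also that no new base case is actually needed, since the recursion $t\mapsto t-3$ bottoms out at $I_{1,1}, I_{1,2}, I_{1,3}$, all covered by Lemma \ref{lem:base}.

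The paper circumvents precisely this obstruction by a different mechanism. Since $(1,t-2)$ is an isolated vertex of $I_{1,t-2}-N[\{(i,t-4),(j,t-4)\}]$, that independence complex is collapsible, so Lemma \ref{lem:addedge} permits adding all edges $((i,t-4),(j,t-4))$, $1\leq i\neq j\leq n$; in the resulting graph $\tilde{I}_{1,t-2}$ the vertex $(1,t-3)$ is simplicial, and Kawamura's Lemma \ref{lem:simplicial} gives $\ind(I_{1,t-2})\simeq\bigvee\limits_{i=2}^{n}\Sigma\big(\ind(\tilde{I}_{1,t-2}-N[\{(i,t-4)\}])\big)$. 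The disjoint union that genuinely appears is $\tilde{I}_{1,t-2}-N[\{(i,t-4)\}]\cong I_{i,t-5}\sqcup K_2$, where the $K_2$ is the edge $((1,t-2),(i,t-3))$ --- not your $K_2\times K_n\sqcup I_{1,t-5}$ --- yielding $\ind(I_{1,t-2})\simeq\bigvee\limits_{n-1}\Sigma^2\ind(I_{1,t-5})$. Your join formula $\big(\bigvee\limits_{n-1}\bS^1\big)\ast X$ happens to agree numerically with $\bigvee\limits_{n-1}\Sigma^2 X$ on wedges of spheres, which is why your final arithmetic matches the statement; but the decomposition feeding it does not exist, so the proof as proposed cannot be completed along these lines.
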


\begin{proof}
	To prove this, we first construct a graph $\tilde{I}_{1,t-2}$ that contains $I_{1,t-2}$ as a subgraph such that $(1,t-3)$ is  a simplicial vertex in $\tilde{I}_{1,t-2}$ and $\ind(I_{1,t-2}) \simeq \ind(\tilde{I}_{1,t-2})$. Observe that the vertex $(1,t-2)$ is an isolated vertex in $I_{1, t-2} - N[\{(1,t-4), (2,t-4)\}]$ and hence $\ind(I_{1,t-2} - N[\{(1,t-4),(2,t-4)\}])$ is collapsible. By Lemma \ref{lem:addedge}, $\ind(I_{1,t-2}) \simeq \ind(H)$, where $V(H) = V(I_{1, t-2})$ and $E(H) = E(I_{1, t-2}) \cup \{((1,t-4), (2,t-4))\}$. By repeating this process for all pairs $(i, j),\  1 \leq i \neq j \leq n$, we get the graph $\tilde{I}_{1,t-2}$ such that $\ind(I_{1,t-2}) \simeq \ind(\tilde{I}_{1,t-2})$, where $V(\tilde{I}_{1,t-2}) = V(I_{1,t-2})$ and $E(\tilde{I}_{1,t-2}) =E(I_{1,t-2}) \cup \{((i,t-4), (j,t-4))\ : \  1 \leq i \neq  j \leq n\}$. Since $N((1,t-3)) = \{(2,t-4), \ldots, (n,t-4)\}$, the vertex $(1,t-3)$ is a simplicial vertex in $\tilde{I}_{1,t-2}$ .  From Lemma \ref{lem:simplicial}, we have 
	\begin{center}
	$\ind(I_{1,t-2}) \simeq \ind(\tilde{I}_{1,t-2}) \simeq \bigvee\limits_{i= 2}^{n} \Sigma \big{(}\ind(\tilde{I}_{1,t-2} - N[	\{(i,t-4)\}])\big{)}$.
	\end{center}

Observe that for each $2 \leq i \leq n$,  the graph $\tilde{I}_{1,t-2} - N[	\{(i,t-4)\}]$ is isomorphic to $I_{i, t-5} \sqcup K_2$, where $K_2$ appears because of the edge $((1, t-2), (i, t-3))$ in $\tilde{I}_{1,t-2}- N[\{(i,t-4)\}]$. We note that for any $j$,  $I_{i, j} \cong I_{l, j}$  and therefore 
	\begin{align}\label{eq:I{1,t-2}}
	\textstyle \ind(I_{1,t-2})
	\simeq \bigvee\limits_{(n-1)} \Sigma \Big( \ind(I_{1, t-5} \sqcup K_2)\Big) \simeq \bigvee\limits_{(n-1)} \Sigma^2 \Big( \ind(I_{1, t-5}\big)\Big).
	\end{align}

We consider the following three cases.

\noindent{\bf Case 1.} $t = 3k$.

In this case, since $t-2= 3(k-1)+1$, using  Equation \eqref{eq:I{1,t-2}} and Lemma \ref{lem:base}, we conclude that 
\begin{align*}
\textstyle \ind(I_{1,t-2}) \simeq \bigvee\limits_{(n-1)^{k-1}} \Sigma^{2(k-1)} (\ind(I_{1,1}) )\simeq \bigvee\limits_{(n-1)^{k-1}} \Sigma^{2(k-1)} (\bigvee\limits_{n-1} \bS^{0}) \simeq \bigvee\limits_{(n-1)^{k}} \bS^{2(k-1)}.
\end{align*}

\noindent {\bf Case 2.} $t = 3k+1$.

	In this case, $t-2= 3(k-1)+2$. Again by Lemma \ref{lem:base} and Equation \eqref{eq:I{1,t-2}}, we have 
$$\textstyle \ind(I_{1,t-2}) \simeq \bigvee\limits_{(n-1)^{k-1}} \Sigma^{2(k-1)} (\ind(I_{1,2}) )\simeq \bigvee\limits_{(n-1)^{k-1}} \Sigma^{2(k-1)} (\bigvee\limits_{n-1} \bS^{1}) \simeq \bigvee\limits_{(n-1)^{k}} \bS^{2(k-1)+1}.
$$

\noindent {\bf Case 3.} $t = 3k+2$.

In this case, since $t-2= 3(k-1)+3$, 
$\ind(I_{1,t-2}) \simeq \bigvee\limits_{(n-1)^{k-1}} \Sigma^{2(k-1)} (\ind(I_{1,3}) )$. By Lemma \ref{lem:base}, $\ind(I_{1,3})$ is contractible and so is  $\ind(I_{1,t-2})$.
\end{proof}

We are now ready to prove the main result of this section. Firstly, note that $M_r(K_2)$ is isomorphic to odd cycle $C_{2r+1}$, for which independence complex has been computed by Kozlov in \cite{koz}. Here, we determine the homotopy type of $\ind(M_r(K_n))$ for $n > 2$ and any $r$.

 \begin{thm} \label{thm:mycielskian} Let $r \geq 2$ be a positive integer. Then,
	
	\begin{center}
		$ \ind(M_r(K_n)) \simeq 
		\begin{cases}
		\bigvee\limits_{(n-1)^k} \bS^{2k-1} & \text{if } r=3k, \\
		\bigvee\limits_{n(n-1)^k} \bS^{2k} & \text{if } r=3k+1, \\
		\bigvee\limits_{(n-1)^{(k+1)}} \bS^{2k+1} & \text{if } r= 3k+2. \\
		\end{cases}$
	\end{center}
\end{thm}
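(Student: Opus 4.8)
The plan is to strip off the apex vertex $w_r$ using Barmak's star-cluster technique and then analyze the resulting intersection complex combinatorially. Write $X = \ind(M_r(K_n))$. First I would observe that $w_r$ is non-isolated (it is adjacent to every $(i,r-1)$) and lies in no triangle, because level $r-1$ spans no edges. Hence $N(w_r) = \{(1,r-1),\dots,(n,r-1)\}$ is a simplex of $X$, and Lemma \ref{lem:cluster} gives
$$\ind(M_r(K_n)) \simeq \Sigma(K), \qquad K := st_{X}(\{w_r\}) \cap SC_{X}(N(w_r)).$$
Everything then reduces to identifying the homotopy type of $K$ and suspending once.

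Second, I would rewrite $K$ as a union. Since $SC_{X}(N(w_r)) = \bigcup_{i=1}^{n} st_{X}(\{(i,r-1)\})$, we get $K = \bigcup_{i=1}^{n} K_i$ with $K_i := st_{X}(\{w_r\}) \cap st_{X}(\{(i,r-1)\})$. A short adjacency check shows that $\sigma \in K_i$ precisely when $\sigma$ is an independent set avoiding $w_r$, all of level $r-1$, and every $(i',r-2)$ with $i' \neq i$; equivalently $K_i = \ind(I_{i,r-2})$. For $i \neq i'$ the intersection forces $\sigma$ to avoid both $(i,r-2)$ and $(i',r-2)$, hence all of level $r-2$, so $K_i \cap K_{i'} = \ind(K_n \times L_{r-3}) =: L$ independently of the pair; in particular $\big(\bigcup_{i=1}^{j-1}K_i\big)\cap K_j = L$ for every $j$. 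Using $I_{i,j}\cong I_{1,j}$, each $K_i \cong \ind(I_{1,r-2})$ is computed by Lemma \ref{lem:I_{i,r-2}} (and Lemma \ref{lem:base} for $r\in\{3,4,5\}$), while $L$ is given by Proposition \ref{prop:main}.

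Third, I would split into residues of $r$ modulo $3$. When $r=3k+1$, the intersection $L$ is contractible and each $K_i \simeq \bigvee_{(n-1)^k}\bS^{2k-1}$, so Lemma \ref{lem:wedge} yields $K \simeq \bigvee_{n(n-1)^k}\bS^{2k-1}$; when $r=3k+2$, each $K_i$ is contractible and $L \simeq \bigvee_{(n-1)^k}\bS^{2k-1}$, so Lemma \ref{lem:contractible} yields $K \simeq \bigvee_{(n-1)^{k+1}}\bS^{2k}$. Suspending gives the claimed wedges in these two cases, and the small case $r=2$ is exactly Theorem \ref{thm:requal2}.

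The remaining case $r=3k$ is the main obstacle, since now both $K_i \simeq \bigvee_{(n-1)^k}\bS^{2k-2}$ and $L \simeq \bigvee_{(n-1)^k}\bS^{2k-2}$ are non-contractible, so neither gluing lemma applies. My idea is to show that the inclusion $L \hookrightarrow K_i$ is a homotopy equivalence and to delete the vertices $(i,r-2)$ one at a time. For this I would compute $lk_{K_i}((i,r-2)) = \ind(I_{i,r-2}-N[(i,r-2)]) \cong \ind(I_{i,r-3})$, which is contractible when $r=3k$ (it lands in the $\equiv 2 \pmod 3$ branch of Lemma \ref{lem:I_{i,r-2}}, or is a single vertex when $r=3$). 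Since $(i,r-2)$ occurs as a vertex only in $K_i$, the same computation gives $lk_{K}((i,r-2))$ contractible, so applying Lemma \ref{lem:link} repeatedly to strip $(1,r-2),\dots,(n,r-2)$ collapses $K$ onto $L$; thus $K \simeq L \simeq \bigvee_{(n-1)^k}\bS^{2k-2}$ and $\Sigma(K) \simeq \bigvee_{(n-1)^k}\bS^{2k-1}$. The genuine difficulty is concentrated here: in the $r\equiv 0$ case one must exploit that adjoining the single top vertex $(i,r-2)$ leaves the homotopy type unchanged, whereas the off-the-shelf union lemmas only handle the situations where either the pieces or their overlaps are contractible.
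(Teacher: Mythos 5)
Your proposal is correct and follows essentially the same route as the paper's proof: the star-cluster reduction via Lemma \ref{lem:cluster}, the identification $K=\bigcup_{i}\ind(I_{i,r-2})$ with pairwise (and successive) intersections $\ind(K_n\times L_{r-3})$, Lemmas \ref{lem:wedge} and \ref{lem:contractible} for $r\equiv 1,2 \pmod 3$, and stripping the vertices $(i,r-2)$ one at a time via Lemma \ref{lem:link} using the contractibility of $\ind(I_{i,r-3})$ when $r\equiv 0 \pmod 3$. The step you present as your own device in the $r=3k$ case is precisely the paper's Case 3, and your sphere counts and dimensions agree with the paper throughout, including the degenerate observation that $\ind(I_{i,0})$ is a point when $r=3$.
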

\begin{proof}
	
	If $r =2$, then  Theorem \ref{thm:requal2} implies that $\ind(M_r(K_n)) \simeq \Sigma (\ind(K_n))$, and hence the result follows. So we  assume that  $r \geq 3$.  
	In $M_r(K_n)$, $N(w_r) = \{(1, r-1), \ldots, (n, r-1)\}$ and there is no edge among the vertices of $N(w_r)$. Therefore, from Lemma \ref{lem:cluster} 
	
	\begin{align}\label{eqn:cluster}
	\ind(M_r(K_n)) \simeq \Sigma \big( st_{\ind(M_r(K_n))}(w_r) \cap SC_{\ind(M_r(K_n))}(\{(1,r-1), \ldots, (n, r-1)\})\big).
\end{align}

\noindent	Let $K$ denote the complex $ st_{\ind(M_r(K_n))}(w_r) \cap SC_{\ind(M_r(K_n))}(\{(1,r-1), \ldots, (n, r-1)\})$.

	\begin{claim}\label{claim:setK}
		$K = \bigcup\limits_{i=1}^{n}\emph{\ind}(I_{i,r-2}).$
	\end{claim}
		Let $\sigma \in K$. If $\sigma \cap \{(1, r-2), \ldots, (n, r-2)\} = \emptyset$, then $\sigma \in \bigcup\limits_{i=1}^{n}\ind(I_{i,r-2})$. On the other hand  if $ \sigma \cap \{(1, r-2), \ldots, (n, r-2)\} \neq \emptyset$, then there exists a unique $i$ such that $(i, r-2) \in \sigma$ and in this case $\sigma \in \ind(I_{i, r-2})$. Hence, $K \subseteq \bigcup\limits_{i=1}^{n}\ind(I_{i,r-2})$. If $\tau \in \bigcup\limits_{i=1}^{n}\ind(I_{i,r-2})$, then $\tau \in \ind(I_{i, r-2})$ for some $i$, therefore  $\tau \in st_{\ind(M_r(K_n))}((i, r-1)) \cap st_{\ind(M_r(K_n))}(w_r)$. This completes the proof of Claim \ref{claim:setK}.
	
    	For $ 1\leq i\neq l \leq n$, observe that $I_{i,j} \cap I_{l,j} \cong K_n \times L_{j-1}$, therefore $\ind(I_{i,j} \cap I_{l,j}) \simeq \ind(K_n \times L_{j-1})$. Also, for any arbitrary graph $G$ and $A,B \subseteq V(G)$, $\ind(G[A\cap B]) = \ind(G[A]) \cap \ind(G[B])$, and hence   $\ind(I_{i,j} \cap I_{l, j}) =  \ind(I_{i,j}) \cap \ind(I_{l, j})$. Further, $\big(\bigcup\limits_{s= 1}^{i} \ind(I_{s, j})\big) \bigcap \ind(I_{i+1, j}) = \ind(I_{i, j})\cap \ind(I_{i+1,j})\simeq \ind(K_n \times L_{j-1})$ for all $1 \leq i \leq n-1$.\\

\noindent{\bf Case 1:} $r = 3k+1$.

From Lemmas  \ref{lem:base} and \ref{lem:I_{i,r-2}}, $\ind(I_{i,r-2}) \simeq \bigvee\limits_{(n-1)^{k}} \bS^{2(k-1)+1}$.  Proposition  \ref{prop:main} implies that  $\ind(K_n \times L_{r-3})$ is contractible and therefore  by using Claim \ref{claim:setK} and Lemma \ref{lem:wedge}, we conclude that $K \simeq \bigvee\limits_{n(n-1)^k} \bS^{2(k-1)+1}$. Thus,  from Equation \eqref{eqn:cluster}, $$\textstyle \ind(M_r(K_n)) \simeq \Sigma(K) \simeq \bigvee\limits_{n(n-1)^k} \bS^{2k}.$$

\noindent{\bf Case 2 :} $r  = 3k+2$.

In this case, $\ind(I_{i,r-2})$ is contractible  from Lemmas \ref{lem:base} and \ref{lem:I_{i,r-2}}. Since,  $r-3 = 3(k-1)+2$, $\ind(K_n \times L_{r-3})  \simeq \bigvee\limits_{(n-1)^k} \bS^{2k-1}$ from Lemma \ref{prop:main}. Hence by using Lemma \ref{lem:contractible} and Claim \ref{claim:setK}, we conclude that  $K \simeq \bigvee\limits_{(n-1)(n-1)^k} \bS^{2k} $. Thus,
$$\ind(M_r(K_n)) \simeq \bigvee\limits_{(n-1)^{k+1}} \bS^{2k+1}.$$

\noindent{\bf Case 3.} $r = 3k$.

First we show that $K \simeq \ind(K_n \times L_{r-3})$. Observe that $lk_K((1,r-2)) = \ind(I_{1,r-3})$. Using Lemmas \ref{lem:base} and \ref{lem:I_{i,r-2}}, we get that $\ind(I_{1,r-3})$ is contractible. Hence  Lemma \ref{lem:link} implies that $K \simeq K^{1} := K \setminus \{\sigma \in K \ : \ (1,r-3)\in \sigma\}$.  Now  $lk_{K^1}((2,r-2)) = \ind(I_{2,r-3})\cong \ind(I_{1,r-3})$ and therefore $K^1 \simeq K^2 := K^1 \setminus \{\sigma \in K^1  :  (2,r-3) \in \sigma\}$. Repeating this process for all $(i, r-2), 3 \leq i \leq n$, we conclude that $K \simeq K^n  :=\{\sigma \in K \ : \ (1,r-2), \ldots, (n,r-2) \notin \sigma\}$. It is easy to check  that 
$K^n \cong \ind(K_n \times L_{r-3})$.
From Proposition \ref{prop:main}, we conclude that $$\ind(M_r(K_n))  \simeq \Sigma(\ind(K_n \times L_{r-3})) \simeq \Sigma(\bigvee\limits_{(n-1)^k} \bS^{2(k-1)}) \simeq \bigvee\limits_{(n-1)^k} \bS^{2k-1},$$ and hence the theorem.
\end{proof}


\section{Independence complexes of graphs with a specific local structure}\label{sec:subdivision}

Let $G$ be a graph that contains a crossing of two edge as in Figure \ref{crossing}. In this section we prove that if we replace  a crossing in graph $G$ with the structure as in Figure \ref{ladder}, then the independence complex of  $H$ is the suspension of the independence complex of $G$. 

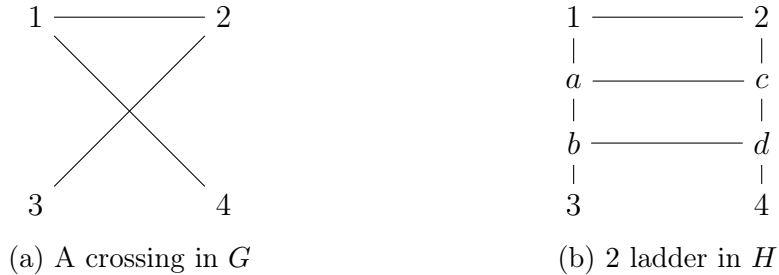
\begin{figure}[H]
	\begin{subfigure}[]{0.45 \textwidth}
		\centering
		\begin{tikzpicture}
		[scale=0.5, vertices/.style={circle, draw = black!100, fill= gray!0, inner sep=0.5pt}] 
		\node (a) at (5,0)  {$4$};
		\node (b) at (5,5) {$2$};
		\node (c) at (0,5)  {$1$};
		\node (d) at (0,0)  {$3$};
		
		\foreach \from/\to in {a/c, b/d, b/c}
		\draw (\from) -- (\to);
		\end{tikzpicture}
		\caption{A crossing in $G$}\label{crossing}
	\end{subfigure}
	\begin{subfigure}[]{0.45 \textwidth}
		\centering
		\begin{tikzpicture}
		[scale=0.5, vertices/.style={circle, draw = black!100, fill= gray!0, inner sep=0.5pt}]
		\node (a) at (5,0)  {$4$};
		\node (b) at (5,5) {$2$};
		\node (c) at (0,5)  {$1$};
		\node (d) at (0,0)  {$3$};
		\node (e) at (0,1.65) {$b$};
		\node (f) at (0,3.30) {$a$};
		\node (g) at (5,1.65) {$d$};
		\node (h) at (5,3.30) {$c$};
		
		\foreach \from/\to in {b/c, c/f, f/e, f/h, e/g, e/d, b/h, h/g, a/g}
		\draw (\from) -- (\to);
		\end{tikzpicture}
		\caption{$2$ ladder in $H$}\label{ladder}
	\end{subfigure}
	\caption{Replacing a crossing in $G$ by a 2 ladder to obtain $H$} \label{fig:subdivision}
\end{figure}

\begin{thm}\label{thm:subdivision}
	Let $G$ be a graph that contains graph depicted in Figure \ref{crossing} as a subgraph. If  $H$ is the graph with $V(H) = V(G) \sqcup \{a,b,c,d\}$, $E(H) = (E(G) \setminus \{(1,4), (2,3)\}) \cup \{(1,a), (a,b), (b,3), (2,c), (c,d), (d,4), (a,c), (b,d)\}$  obtained from $G$ as shown in Figure \ref{ladder}, then    
	$$\emph{\ind(H)}\simeq \Sigma(\emph{\ind(G)}).$$ 
\end{thm}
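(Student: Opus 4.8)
The plan is to apply Barmak's star-cluster suspension result (Lemma \ref{lem:cluster}) to a single vertex of the ladder, and then to identify the resulting intersection complex with $\ind(G)$ by an explicit relabelling of vertices. I will write $N_G$ for neighbourhoods in $G$ and $N$ for neighbourhoods in $H$.

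First I would single out the vertex $a$, whose neighbourhood in $H$ is $N(a)=\{1,b,c\}$. Since $b,c$ are newly added vertices with $N(b)=\{a,3,d\}$ and $N(c)=\{a,2,d\}$, none of $1,b,c$ are pairwise adjacent; this holds irrespective of the edges of $G$ outside the crossing, because $b$ and $c$ are joined only to $a,d$ and to their own pendant. Hence $a$ is non-isolated and lies in no triangle, so Lemma \ref{lem:cluster} applies and yields $\ind(H)\simeq\Sigma(K)$, where $K:=st_{\ind(H)}(\{a\})\cap SC_{\ind(H)}(N(a))$.

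Next I would describe $K$ combinatorially. If $\sigma\in K$, then $\sigma\cup\{a\}$ is independent, so $\sigma$ avoids $N(a)=\{1,b,c\}$; moreover $a\notin\sigma$, since otherwise $\sigma$ could not be extended by any vertex of $N(a)$ and so would miss $SC_{\ind(H)}(N(a))$. Thus every face of $K$ is an independent set of $H$ contained in $U:=V(H)\setminus\{1,a,b,c\}=\{2,3,4,d\}\cup(V(G)\setminus\{1,2,3,4\})$, and membership in $SC_{\ind(H)}(N(a))$ translates (taking the three clauses from $1,b,c$ respectively) into the disjunction $\sigma\cap(N_G(1)\setminus\{4\})=\emptyset$, or $\sigma\cap\{3,d\}=\emptyset$, or $\sigma\cap\{2,d\}=\emptyset$.

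Finally I would define the relabelling $\phi\colon U\to V(G)$ that sends $d\mapsto 1$ and fixes every other vertex, and prove that $\sigma\in K$ if and only if $\phi(\sigma)\in\ind(G)$; this exhibits a simplicial isomorphism $K\cong\ind(G)$ and finishes the argument. The verification splits into two cases. If $d\notin\sigma$, then independence in $H$ differs from independence in $G$ only through the deleted edge $(2,3)$ (the other deleted edge $(1,4)$ is irrelevant since $1\notin U$), and the displayed disjunction is readily seen to be equivalent to $\{2,3\}\not\subseteq\sigma$; together these say exactly that $\sigma$ is independent in $G$. If $d\in\sigma$, then $4\notin\sigma$ automatically because $d\sim 4$ in $H$, the second and third clauses fail, and the first clause forces $\sigma$ to avoid all of $N_G(1)$; since the edges of $H$ among $\{3\}\cup(V(G)\setminus\{1,2,3,4\})$ coincide with those of $G$, this is precisely the condition that $(\sigma\setminus\{d\})\cup\{1\}=\phi(\sigma)$ be independent in $G$.

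The main obstacle is exactly this identification $K\cong\ind(G)$: the complex $K$ is an intersection of stars, a priori only a union of three induced subcomplexes rather than an independence complex, so the real content is recognising that the far ladder-corner $d$ behaves in $K$ exactly like the vertex $1$ of $G$. The two observations that make this work are that the ladder edge $d\sim 4$ reinstates the adjacency of $1$ to $4$ deleted when passing to $H$, and that the disjunction collapses to the single constraint $\{2,3\}\not\subseteq\sigma$, reinstating the deleted edge $(2,3)$; once these are in hand, the relabelling $d\mapsto 1$ matches non-faces on both sides and the proof is complete.
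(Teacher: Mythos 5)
Your proof is correct, but it takes a genuinely different route from the paper's. The paper never invokes Lemma \ref{lem:cluster} here: instead it sets $K=\ind(H)$, takes the two star clusters $K_1=SC_{K}(\{a,d\})$ and $K_2=SC_{K}(\{b,c\})$ (both simplices of $\ind(H)$, hence both clusters contractible by Lemma \ref{lem2.2}), verifies $K=K_1\cup K_2$, shows $K_1\cap K_2=\ind(G)$, and concludes by Lemma \ref{lem:suspension}. The point of that particular choice of simplices is that the non-faces $\{1,4\}\notin K_1$ and $\{2,3\}\notin K_2$ reinstate exactly the two deleted edges, so the intersection is $\ind(G)$ \emph{on the nose}, with the identity identification of vertices; the whole verification is a few lines, and, being symmetric in the two rails of the ladder, it carries over verbatim to the degenerate case $3=4$ (Theorem \ref{thm:subdivision1}). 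Your route instead applies the packaged suspension result (Lemma \ref{lem:cluster}) at the single vertex $a$ -- which requires the extra (easily checked) hypothesis that $a$ lies in no triangle -- and then pays for the asymmetry with a non-identity identification $K\cong\ind(G)$ via the relabelling $d\mapsto 1$; this is the delicate part, and your case analysis of it is sound. One step you gloss as ``readily seen'' deserves emphasis: the equivalence of your disjunction with $\{2,3\}\not\subseteq\sigma$ (when $d\notin\sigma$) needs the clause coming from $u=1$ to \emph{fail} whenever $\{2,3\}\subseteq\sigma$, i.e.\ it silently uses $2\in N_G(1)$, which holds only because Figure \ref{crossing} includes the edge $(1,2)$; similarly the ladder edge $(d,4)$ is what forces $4\notin\sigma$ in your second case and lets $d$ impersonate the vertex $1$. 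So both arguments rest on the same Barmak machinery (Lemma \ref{lem:cluster} is itself proved by a two-contractible-pieces decomposition of the paper's kind); the paper's version buys brevity and symmetry, while yours makes explicit the structural fact that the far corner $d$ of the ladder plays the role of the vertex $1$ of $G$.
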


\begin{proof}
Observe that $\{a,d\}$ and $\{b,c\}$ are simplices of Ind$(H)$. Let $K=\text{Ind}(H)$, $K_1=SC_{K}(\{a,d\})$ and $K_2=SC_{K}(\{b,c\})$. From Lemma \ref{lem2.2}, $K_1$ and $K_2$ are contractible subcomplexes of $K$. 

We  first show that $K=K_1 \cup K_2.$ Clearly, $K_1 \cup K_2$ is a subcomplex of $K$. Let $\sigma \in K$. If $\{a,b,c,d\} \cap \sigma \neq \emptyset$, then by definition $\sigma \in K_1$ or $K_2$. Therefore we assume that  $\{a,b,c,d\} \cap \sigma = \emptyset$. If $1 \notin \sigma$, then $\sigma \in st_K(\{a\})$. If $1 \in \sigma$, then $2 \notin \sigma$ and thereby implying that $\sigma \in st_K(\{c\})$. 

\begin{claim}\label{claim2}
$K_1 \cap K_2=\ind(G).$
\end{claim}
Let $\sigma \in K_1\cap K_2$. Clearly, $\{a,b,c,d\} \cap \sigma =\emptyset$. To show that $\sigma \in \text{Ind}(G)$, it is enough to show that $\{1,4\}\nsubseteq \sigma$ and $\{2,3\} \nsubseteq \sigma$. However, $\{1,4\} \notin K_1$ implies that $\{1,4\}\nsubseteq \sigma$. Similarly, $\{2,3\}\notin K_2$ implies that $\{2,3\} \nsubseteq \sigma$. 

Now, let $\sigma \in \text{Ind}(G)$. Since $\{1,4\}\nsubseteq \sigma$, $\sigma$ is in $st_K(\{a\})$ or $st_K(\{d\})$. Moreover, $\{2,3\}\nsubseteq \sigma$ implies that either $\sigma$ is in $st_K(\{b\})$ or is in $st_K(\{c\})$. Therefore, $\sigma \in K_1 \cap K_2.$

Thus result follows from Lemma \ref{lem:suspension}.
\end{proof}

    We note that the proof of Theorem \ref{thm:subdivision} also holds if we assume that the vertices $3$ and $4$ are same (cf. Figure \ref{fig:subdivision2}), {\it i.e.}, $3=4$. We, therefore, have the following result as a special case.

\begin{thm}[Section $3.3.1$, \cite{JS10}]\label{thm:subdivision1}
	Let $G$ be a graph that contains triangle  depicted in Figure \ref{triangle} as a subgraph. If  $H$ is the graph with $V(H) = V(G) \sqcup \{a,b,c,d\}$, $E(H) = (E(G) \setminus \{(1,3), (2,3)\}) \cup \{(1,a), (a,b), (b,3), (2,c), (c,d), (d,3), (a,c), (b,d)\}$  obtained from $G$ as shown in Figure \ref{ladder1}, then    
	$$\emph{\ind(H)}\simeq \Sigma(\emph{\ind(G)}).$$ 
\end{thm}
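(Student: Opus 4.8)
The plan is to reduce Theorem \ref{thm:subdivision1} directly to Theorem \ref{thm:subdivision} by observing that the latter never actually used the vertices $3$ and $4$ being distinct. First I would set up the statement so that $H$ is built from $G$ exactly as in Theorem \ref{thm:subdivision}, but with $3=4$; so the four new vertices $a,b,c,d$ are inserted, the edges $(1,3)$ and $(2,3)$ are deleted, and the ladder edges $(1,a),(a,b),(b,3),(2,c),(c,d),(d,3),(a,c),(b,d)$ are added (see Figure \ref{ladder1}). The key point I want to stress is that the combinatorial structure among $a,b,c,d$ and their attachment to the rest of $G$ is identical to the generic case: $\{a,d\}$ and $\{b,c\}$ are still edges (hence simplices of $\ind(H)$), $a$ is still adjacent only to $1,b,c$, $d$ is still adjacent only to $c,3,b$, and so on. Identifying $3$ with $4$ only changes the ambient graph $G$, not the local ladder gadget.

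With that setup, I would simply rerun the proof of Theorem \ref{thm:subdivision} verbatim. Concretely, put $K=\ind(H)$, $K_1=SC_K(\{a,d\})$, $K_2=SC_K(\{b,c\})$, and invoke Lemma \ref{lem2.2} to conclude both are contractible. The decomposition $K=K_1\cup K_2$ goes through unchanged: for $\sigma\in K$ with $\{a,b,c,d\}\cap\sigma=\emptyset$, either $1\notin\sigma$ (so $\sigma\in st_K(\{a\})$) or $1\in\sigma$, in which case $2\notin\sigma$ forces $\sigma\in st_K(\{c\})$; this argument used only the adjacencies of $1,2$ to the gadget and is insensitive to whether $3=4$. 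The only spot deserving care is the identification of the intersection $K_1\cap K_2$ with $\ind(G)$. In the original proof one ruled out both $\{1,4\}$ and $\{2,3\}$ from a face in $K_1\cap K_2$; when $3=4$ the two ``forbidden'' pairs become $\{1,3\}$ and $\{2,3\}$, which are precisely the edges deleted from $G$ to form the triangle of Figure \ref{triangle}. So I would check that $\{1,3\}\notin K_1$ and $\{2,3\}\notin K_2$, exactly mirroring $\{1,4\}\notin K_1$ and $\{2,3\}\notin K_2$ before, and conversely that any $\sigma\in\ind(G)$ lands in $K_1\cap K_2$. Finally, Lemma \ref{lem:suspension} yields $K\simeq\Sigma(K_1\cap K_2)=\Sigma(\ind(G))$.

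The main (and really the only) obstacle is bookkeeping rather than mathematics: I need to confirm that collapsing $4$ onto $3$ does not create any unintended adjacency or independent pair that would break the star-cluster decomposition or the intersection computation. In particular I would double-check that $a$ and $3$ remain non-adjacent and $d$ and $1$ remain non-adjacent (so that the relevant stars behave as claimed), and that no new edge among $\{a,b,c,d\}$ is accidentally introduced by the identification. Since all adjacencies of $a,b,c,d$ are dictated solely by the ladder edges listed in the hypothesis, and none of those edges references the distinction between $3$ and $4$, this verification is immediate. Hence the entire argument of Theorem \ref{thm:subdivision} transfers, and the remark preceding the statement---that the proof ``also holds if we assume that the vertices $3$ and $4$ are same''---is justified.
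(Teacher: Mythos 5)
Your proposal is correct and takes essentially the same route as the paper: the paper proves Theorem~\ref{thm:subdivision1} precisely by remarking that the star-cluster argument of Theorem~\ref{thm:subdivision} (with $K_1=SC_K(\{a,d\})$, $K_2=SC_K(\{b,c\})$, and Lemmas~\ref{lem2.2} and~\ref{lem:suspension}) goes through unchanged when the vertices $3$ and $4$ are identified. Your explicit check that the forbidden pairs become $\{1,3\}$ and $\{2,3\}$ and that the ladder gadget's adjacencies are unaffected by the identification is exactly the bookkeeping the paper leaves implicit.
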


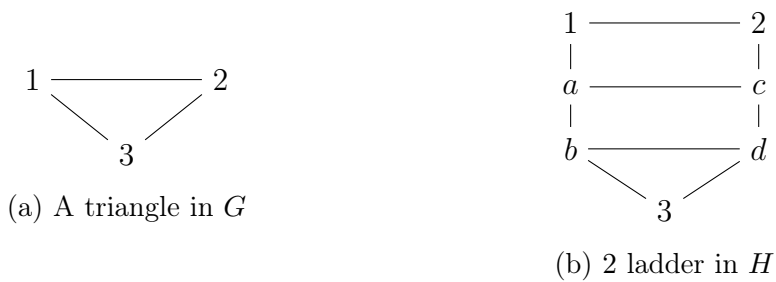
\begin{figure}[H]
	\begin{subfigure}[]{0.45 \textwidth}
		\centering
		\begin{tikzpicture}
		[scale=0.5, vertices/.style={circle, draw = black!100, fill= gray!0, inner sep=0.5pt}] 
		\node (a) at (2.5,3)  {$3$};
		\node (b) at (5,5) {$2$};
		\node (c) at (0,5)  {$1$};
		
		\foreach \from/\to in {a/c,  b/c, a/b}
		\draw (\from) -- (\to);
		\end{tikzpicture}
		\caption{A triangle in $G$}\label{triangle}
	\end{subfigure}
	\begin{subfigure}[]{0.45 \textwidth}
		\centering
		\begin{tikzpicture}
		[scale=0.5, vertices/.style={circle, draw = black!100, fill= gray!0, inner sep=0.5pt}]
		\node (a) at (2.5,0)  {$3$};
		\node (b) at (5,5) {$2$};
		\node (c) at (0,5)  {$1$};
		\node (e) at (0,1.65) {$b$};
		\node (f) at (0,3.30) {$a$};
		\node (g) at (5,1.65) {$d$};
		\node (h) at (5,3.30) {$c$};
		
		\foreach \from/\to in {a/e,b/c, c/f, f/e, f/h, e/g,  b/h, h/g, a/g}
		\draw (\from) -- (\to);
		\end{tikzpicture}
		\caption{$2$ ladder in $H$} \label{ladder1}
	\end{subfigure}
	\caption{Replacing an edge in $G$ by a 2 ladder to obtain $H$} \label{fig:subdivision2}
\end{figure}

Before proceeding further, we would like to point out that Skwarski has considered a similar construction as in Figure \ref{fig:subdivision2} in \cite[Section 3.3.1]{JS10}.

We now record a straight forward observation that follows from Theorem \ref{thm:subdivision1}.

\begin{cor} Let $G$ be a graph with Figure \ref{ladder1} as an induced subgraph. Then the independence complex of $G$ has the homotopy type of a suspension.
\end{cor}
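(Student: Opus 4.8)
The plan is to prove this corollary as an immediate consequence of Theorem~\ref{thm:subdivision1}. The key point is that the graph $H$ appearing in Figure~\ref{ladder1} is precisely the local gadget produced by the ladder construction, so a graph $G$ that \emph{contains} Figure~\ref{ladder1} as an induced subgraph is already in the ``output form'' of that theorem. First I would make this identification explicit: if Figure~\ref{ladder1} sits inside $G$ as an induced subgraph on the vertex set $\{1,2,3,a,b,c,d\}$, then I set $G' := G - \{a,b,c,d\}$ and re-add the crossing (triangle) edges, so that $G'$ has vertex set $V(G) \setminus \{a,b,c,d\}$ together with the edge $(1,3)$ and $(2,3)$ restored, and all other edges of $G$ not incident to $\{a,b,c,d\}$ retained. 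Applying Theorem~\ref{thm:subdivision1} to $G'$ then recovers $G$ as the graph $H$ obtained by the $2$-ladder replacement, yielding $\ind(G) \simeq \Sigma(\ind(G'))$, which is a suspension by definition.

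The main step I would carry out is verifying that $G$ genuinely has the form of the $H$ in Theorem~\ref{thm:subdivision1} with respect to this $G'$. Concretely, I need to check that the edge set of $G$ restricted to $\{1,2,3,a,b,c,d\}$ matches $\{(1,a),(a,b),(b,3),(2,c),(c,d),(d,3),(a,c),(b,d)\}$ and that the vertices $a,b,c,d$ have no neighbours in $G$ outside this set --- this is exactly what the hypothesis ``induced subgraph'' guarantees for $a,b,c,d$, since these four vertices are newly introduced in the construction and the induced-subgraph condition forces their full neighbourhood to lie within the gadget. Once this is confirmed, the equality $\ind(G) \simeq \Sigma(\ind(G'))$ follows directly, and a suspension is visibly the homotopy type of a suspension.

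The part requiring the most care --- though it is routine rather than deep --- is handling the degree conditions on $a,b,c,d$. The construction in Theorem~\ref{thm:subdivision1} produces $a,b,c,d$ with prescribed neighbourhoods internal to the gadget; the induced-subgraph hypothesis ensures no extra edges from these four vertices to the rest of $G$, which is what lets $G'$ be well-defined and lets the theorem apply cleanly. I expect no genuine obstacle here: the corollary is essentially a restatement of Theorem~\ref{thm:subdivision1} read in the reverse direction, so the proof is short. I would simply write that since $G$ contains Figure~\ref{ladder1} as an induced subgraph, $G$ is of the form $H$ for the graph $G'$ described above, and hence by Theorem~\ref{thm:subdivision1}, $\ind(G) \simeq \Sigma(\ind(G'))$ has the homotopy type of a suspension.
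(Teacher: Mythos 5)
Your overall route is exactly the one the paper intends: the corollary is recorded there, without a written proof, as an immediate consequence of Theorem \ref{thm:subdivision1} read in reverse, i.e.\ delete $a,b,c,d$, restore the edges $(1,3)$ and $(2,3)$ to form $G'$, and identify $G$ with the graph $H$ obtained from $G'$ by the $2$-ladder replacement. However, your key verification step rests on a false claim. The hypothesis that Figure \ref{ladder1} is an \emph{induced} subgraph of $G$ does not force the neighbourhoods of $a,b,c,d$ to lie inside the seven-vertex gadget: an induced subgraph on $\{1,2,3,a,b,c,d\}$ only determines the edges with \emph{both} endpoints in that set, and says nothing about edges joining $a,b,c,d$ to $V(G)\setminus\{1,2,3,a,b,c,d\}$. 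If, say, $a$ has a neighbour $u$ outside the gadget, then $G$ is not of the form $H$ for any graph $G'$ (in the theorem's $H$ the neighbourhood of $a$ is exactly $\{1,b,c\}$), so Theorem \ref{thm:subdivision1} cannot be invoked, and your reconstruction of $G'$ does not recover $G$.

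This is not a cosmetic point, because the clean-attachment condition is genuinely used by the underlying argument: the star-cluster proof of Theorem \ref{thm:subdivision} needs $N(a)=\{1,b,c\}$, $N(b)=\{a,3,d\}$, $N(c)=\{2,a,d\}$, $N(d)=\{b,c,3\}$ to establish the covering $\ind(H)=SC_K(\{a,d\})\cup SC_K(\{b,c\})$; for instance, an independent set $\sigma$ disjoint from $\{a,b,c,d\}$ with $1\notin\sigma$ lands in $st_K(\{a\})$ precisely because $\sigma$ then misses all of $N(a)$. With outside neighbours this step can fail outright: if a vertex $v\notin\{1,2,3,a,b,c,d\}$ is adjacent to all of $a,b,c,d$, the independent set $\{v\}$ lies in neither star cluster, so the covering breaks and the suspension structure is no longer delivered by this method. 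To repair your proof you must either read the corollary as the paper tacitly does --- with the additional (and needed) requirement that $a,b,c,d$ have no neighbours outside the gadget, so the ladder meets the rest of $G$ only in the vertices $1,2,3$ --- and say so explicitly, or else supply a separate argument covering the literal induced-subgraph reading, which your proposal does not do.
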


As an application of Theorem \ref{thm:subdivision} and Theorem \ref{thm:subdivision1}, we compute the  homotopy type of the independence complexes of a particular family of graphs.

	Let $C_n^0 \equiv C_n$ be the cycle graph on the vertex set $\{1,2,\ldots,n\}$.
	Let $C_n^1$ be the graph obtained from $C_n$ by subdividing the edges adjacent to $1$ and adding an edge between the newly created  vertices. Let $x_1, y_1$ be the vertices of $V(C_n^1) \setminus V(C_n^0)$. We iteratively define the graph $C_n^j$ to be the graph obtained from $C_n^{j-1}$ as per the above construction. We note that $V(C_n^j) = \{1,2, \dots, n\} \cup \{x_1, x_2, \dots, x_j\} \cup \{y_1, y_2, \dots, y_j\}$, {\it i.e.}, $|V(C_n^j)| = n + 2j$ and
	$	E(C_n^j)  =  (E(C_n^{0}) \setminus \{(1,2), (1,n)\}) 
	 \cup \{(x_i, y_i)  :  1 \leq i \leq j\}
	\cup \{(x_i, x_{i+1}),(y_i, y_{i+1}) : 1 \leq i \leq j-1\} 
	\cup \{(1, x_j), (1, y_j), (2,x_1), (n, y_1)\}$.
	
	For example, Figure \ref{subcycles} shows $C_3^2$ and $C_5^3$.
	
\begin{figure}[H]
	\begin{subfigure}[]{0.45 \textwidth}
		\centering
		\begin{tikzpicture}
		[scale=0.4, vertices/.style={circle, draw = black!100, fill= gray!0, inner sep=0.5pt}] 
		\node (1) at (3,9)  {$1$};
		\node (2) at (0,0)  {$2$};
		\node (3) at (6,0)  {$3$};
		\node (4) at (2,6)  {$x_2$};
		\node (5) at (1,3)  {$x_1$};
		\node (6) at (4,6)  {$y_2$};
		\node (7) at (5,3)  {$y_1$};
		
		\foreach \from/\to in {1/4, 1/6, 5/4, 7/6, 5/2, 7/3, 2/3, 4/6, 5/7}
		\draw (\from) -- (\to);
		\end{tikzpicture}
		\caption{$C_3^2$}\label{C32}
	\end{subfigure}
	\begin{subfigure}[]{0.45 \textwidth}
		\centering
		\begin{tikzpicture}
		[scale=0.40, vertices/.style={circle, draw = black!100, fill= gray!0, inner sep=0.5pt}]
		\node (1) at (4,10)  {$1$};
		\node (2) at (0,2)  {$2$};
		\node (3) at (2.5,0)  {$3$};
		\node (4) at (5.5,0)  {$4$};
		\node (5) at (8,2)  {$5$};
		\node (6) at (3,8)  {$x_3$};
		\node (7) at (2,6)  {$x_2$};
		\node (8) at (1,4)  {$x_1$};
		\node (9) at (5,8)  {$y_3$};
		\node (10) at (6,6)  {$y_2$};
		\node (11) at (7,4)  {$y_1$};
		
		\foreach \from/\to in {1/6, 2/3, 3/4, 4/5, 5/11, 1/9, 9/10, 10/11, 6/7, 7/8, 8/2, 6/9, 7/10, 8/11}
		\draw (\from) -- (\to);
		\end{tikzpicture}
		\caption{$C_5^3$}\label{C53}
	\end{subfigure}
	\caption{Subdivision of cycles}\label{subcycles}
\end{figure}
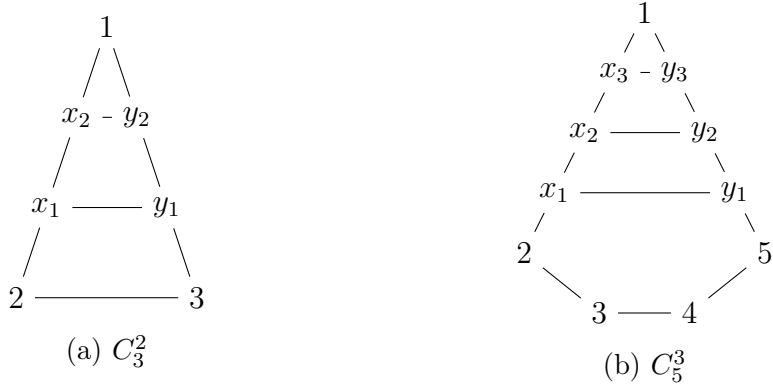

\begin{cor}Let $i,r \geq 1$, then
	\begin{center}
		$ \ind(C_{3r}^i) \simeq 
		\begin{cases}
		\bS^{r-1+j} \bigvee \bS^{r-1+j} & \text{if } i=2j, \\
		\text{\rm \{\text{point}\}} & \text{otherwise}, \\
		\end{cases}$
	\end{center}
	\begin{center}
		$ \ind(C_{3r+1}^i) \simeq 
		\begin{cases}
		\bS^{r-1+j} \bigvee \bS^{r-1+j} & \text{if } i=2j-1, \\
		\text{\rm \{point\}} & \text{otherwise}, \\
		\end{cases}$
	\end{center}
	and
	\begin{center}
		$ \ind(C_{3r+2}^i) \simeq 
		\begin{cases}
		\bS^{r} \bigvee \bS^{r} & \text{if } i=1, \\
		\bS^{r+j} \bigvee \bS^{r+j} & \text{if } i=2j \text{ or } 2j+1, \ j \geq 1. \\
		\end{cases}$
	\end{center}
\end{cor}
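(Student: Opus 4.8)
The plan is to combine Theorem \ref{thm:mycielskian} with repeated applications of Theorem \ref{thm:subdivision} and Theorem \ref{thm:subdivision1}, after first identifying the base cases $C_n^0 = C_n$ and $C_n^1$ topologically. The key structural observation is that the graph $C_n^j$ is built from $C_n^{j-1}$ by exactly the subdivision move of Figure \ref{fig:subdivision}: the two edges incident to vertex $1$ are replaced by a $2$-ladder, with vertex $1$ playing the role of the shared ``top'' vertex. Concretely, passing from $C_n^{j-1}$ to $C_n^j$ inserts the new vertices $x_j, y_j$ together with the ladder edges, so Theorem \ref{thm:subdivision1} (the $3=4$ degenerate case, since both replaced edges meet at vertex $1$) applies and gives
$$\ind(C_n^j) \simeq \Sigma(\ind(C_n^{j-1}))$$
for every $j \geq 1$. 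Iterating this $i$ times yields $\ind(C_n^i) \simeq \Sigma^i(\ind(C_n^0)) = \Sigma^i(\ind(C_n))$, reducing the entire problem to knowing the homotopy type of $\ind(C_n)$ and bookkeeping suspensions.

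Next I would invoke Kozlov's computation of $\ind(C_n)$, which the paper already references in \cite{koz}: up to homotopy, $\ind(C_{3r}) \simeq \bS^{r-1} \vee \bS^{r-1}$, while $\ind(C_{3r\pm 1}) \simeq \bS^{r-1}$ (one sphere). Since suspension satisfies $\Sigma \bS^k \simeq \bS^{k+1}$ and commutes with wedges, $\Sigma^i$ of a wedge of two $(r-1)$-spheres is a wedge of two $(r-1+i)$-spheres, and $\Sigma^i$ of a single sphere is a single sphere. I would then match these against the three cases $n = 3r, 3r+1, 3r+2$ and read off exactly when the answer is a single point (when the starting or intermediate complex is contractible), a single sphere, or a double wedge, tracking the dimension shift $r-1 \mapsto r-1+i$. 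The parity conditions on $i$ in the statement (for instance $i = 2j$ versus $i = 2j-1$) are precisely an artifact of aligning the residue of $n$ modulo $3$ with Kozlov's trichotomy, so I would organize the argument residue-by-residue rather than trying to give one uniform formula.

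The main obstacle will be the careful verification that the degenerate subdivision move really matches $C_n^{j-1} \to C_n^j$ at the level of the precise edge set $E(C_n^j)$ given in the text, including checking that the identification of vertex $1$ with the common endpoint does not interfere with the hypotheses of Theorem \ref{thm:subdivision1} (one must confirm that the edges $(1,2),(1,n)$ of $C_n^{j-1}$ are exactly the two edges being replaced, and that the new ladder attaches with the correct orientation so that the simplices $\{a,d\}$ and $\{b,c\}$ used in that proof remain valid). A secondary subtlety is handling the smallest values of $n$ and the low iterates: for $n=3$ or very small $r$, the base complex $\ind(C_n)$ may itself be contractible (a point), and one must check that suspending a point repeatedly stays contractible, which explains the ``$\{\mathrm{point}\}$'' entries in the first two cases. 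Once the reduction $\ind(C_n^i) \simeq \Sigma^i(\ind(C_n))$ is firmly established, the remainder is a routine but slightly tedious case analysis, so I would front-load the effort into rigorously justifying the single suspension step.
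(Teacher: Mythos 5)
Your key reduction is false, and it is exactly the step you yourself flagged as the ``main obstacle.'' The move $C_n^{j-1}\to C_n^j$ adds only \emph{two} new vertices, $x_j$ and $y_j$: it replaces the two edges at vertex $1$ by two paths of length $2$ joined by a single rung $(x_j,y_j)$. The operation of Theorem \ref{thm:subdivision1}, by contrast, inserts \emph{four} new vertices $a,b,c,d$ forming a full $2$-ladder. So a single application of Theorem \ref{thm:subdivision1} advances the construction by \emph{two} steps, not one: taking the triangle on $\{x_{i-2}, y_{i-2}, 1\}$ (whose rung $(x_{i-2},y_{i-2})$ exists only when $i-2\geq 1$) one gets $\ind(C_n^{i})\simeq \Sigma(\ind(C_n^{i-2}))$ for $i\geq 3$, which is the relation the paper actually uses. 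Your claim $\ind(C_n^{j})\simeq\Sigma(\ind(C_n^{j-1}))$, and hence $\ind(C_n^{i})\simeq\Sigma^{i}(\ind(C_n))$, is contradicted already by $n=3$, $i=1$: one computes $\ind(C_3^1)\simeq \Sigma(\ind(P_1))\vee\Sigma(\ind(P_1))$, which is contractible, whereas $\Sigma(\ind(C_3))\simeq\Sigma(\bS^0\vee\bS^0)\simeq \bS^1\vee\bS^1$. Indeed your reduction is incompatible with the very statement you are proving: in the corollary, $\ind(C_{3r}^i)$ alternates between a point (odd $i$) and a wedge of two spheres (even $i$) as $i$ grows, and no iterated suspension of a fixed space can alternate this way --- the suspension of a point stays a point, and the suspension of a nontrivial wedge of spheres is never a point. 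That mismatch should have been a red flag before any verification of hypotheses.

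Two further gaps follow from this. First, even the correct two-step suspension cannot start at $i=0$ or $i=1$: for $n\geq 4$ the vertices $2$ and $n$ are not adjacent in $C_n$, so there is no triangle at vertex $1$ in $C_n^0$, and Theorem \ref{thm:subdivision1} does not apply to the passages $C_n^0\to C_n^2$ or $C_n^1\to C_n^3$ ``from scratch'' --- the rung must already be present. (One checks directly that $\ind(C_{3r+1}^2)$ is a point while $\Sigma(\ind(C_{3r+1}))\simeq\bS^r$, so $C_n^0\to C_n^2$ genuinely fails to be a suspension.) Consequently the base cases $\ind(C_n^1)$ and $\ind(C_n^2)$ must be computed by hand; the paper does this by noting that vertex $1$ is simplicial in $C_n^i$ for $i\geq 1$ and applying Lemma \ref{lem:simplicial}, which reduces these to the path complexes $\ind(P_{n-2})$ and $\ind(P_n)$ respectively. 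Second, as a consequence, Kozlov's formula for $\ind(C_n)$ --- which your plan makes the foundation of the bookkeeping --- is never needed: the induction has step $2$ and bottoms out at $i=1$ and $i=2$, not at $i=0$.
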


\begin{proof}
    Let $P_n$ be the path graph on $n$ vertices with $n-1$ edges.  From \cite[Proposition 11.16]{dk}, we know that
     \begin{equation} \label{eq:path}
		 \ind(P_n) \simeq 
		\begin{cases}
		\bS^{r-1} & \text{if } n=3r, \\
		\{\text{point}\} & \text{if } n=3r+1, \\
		\bS^{r} & \text{if } 3r+2. 
		\end{cases}
	\end{equation}
 We give the proof  by induction on $i$. Observe that in $C_{n}^{i}$, vertex $1$ is a simplicial vertex for each $i \geq 1$. Therefore, using Lemma \ref{lem:simplicial} and Equation \ref{eq:path} we get 
		\begin{align*}
	\ind(C_{n}^{1}) & \simeq \Sigma \big(\ind(C_{n}^{1} -\{1,2,x_1,y_1\})\big) {\textstyle \bigvee} \Sigma \big(\ind(C_{n}^{1} -\{1,n,x_1,y_1\})\big) \\
	& \simeq \Sigma \big(\ind(P_{n-2})\big) {\textstyle \bigvee} \Sigma\big(\ind(P_{n-2})\big) \\
	& \simeq \begin{cases}\{\text{point}\}  & \text{if } n=3r,\\
	\bS^r {\textstyle \bigvee} \bS^r & \text{if } n=3r+ 1,3r+2.
	\end{cases}
	\end{align*}
	
	Similarly, using Lemma \ref{lem:simplicial} and Equation \ref{eq:path} we get
	 \begin{align*}
	\ind(C_{n}^{2}) & \simeq \Sigma \big(\ind(C_{n}^{2} -\{1,x_1,x_2,y_2\})\big) {\textstyle \bigvee} \Sigma\big(\ind(C_{n}^{1} -\{1,x_2,y_1,y_2\})\big) \\
	& \simeq \Sigma\big(\ind(P_{n})\big) {\textstyle \bigvee} \Sigma\big(\ind(P_{n})\big) \\
	& \simeq 
	\begin{cases}
	\bS^r {\textstyle \bigvee} \bS^r & \text{if } n=3r,\\
	\{\text{point}\} & \text{if } n=3r+1, \\
	\bS^{r+1} {\textstyle \bigvee} \bS^{r+1} & \text{if } n=3r+2.
	\end{cases}
	\end{align*}
	
	Now using Theorem \ref{thm:subdivision1}, we observe that for any $i \geq 3$, $\ind(C_{n}^{i}) = \Sigma(\ind(C_{n}^{i-2}))$ and therefore by induction on $i$, the result follows from $\ind(C_n^{1})$ and $\ind(C_n^2)$.
\end{proof}
  \subsection*{Acknowledgements}
  We are thankful to Priyavrat Deshpande and Dheeraj Kulkarni for inviting us to the workshop ``Young Topologists' Meet" held at Chennai Mathematical Institute in 2018, where the work of Section \ref{sec:subdivision} was done. The first author was partially supported by IRCC, IIT Bombay and the third author was partially supported by a grant from Infosys Foundation.

   


\end{document}